\theoremstyle{change}%
\newtheorem{definition}{Definition:}[section]%
\newtheorem{proposition}[definition]{Proposition:}%
\newtheorem{theorem}[definition]{Theorem:}%
\newtheorem{lemma}[definition]{Lemma:}%
{\theorembodyfont{\rmfamily}\newtheorem{remark}[definition]{Remark:}}%
{\theorembodyfont{\rmfamily}\newtheorem{example}[definition]{Example:}}%
\newenvironment{proof}
{{\bf Proof:}}
{\qquad \hspace*{\fill} $\Box$}%
\newcommand{\fg}{\mathfrak{g}}%
\newcommand{\tr}{\operatorname{tr}}%
\newcommand{\id}{\operatorname{id}}
\newcommand{\rme}{\mathrm{e}}%
\newcommand{\CC}{\mathcal{C}}%
\newcommand{\ZC}{\mathcal{Z}}%
\newcommand{\XC}{\mathcal{X}}%
\newcommand{\DC}{\mathcal{D}}%
\newcommand{\R}{\mathbb{R}}%
\newcommand{\Z}{\mathbb{Z}}%
\begin{document}

	\title{Almost-Riemannian Structures on nonnilpotent, solvable 3D Lie groups}%
	\author{V\'{\i}ctor Ayala%
		\thanks{
			Supported by Proyecto Fondecyt n$%
			{{}^\circ}%
			$ 1190142. Conicyt, Chile.} \\
		Instituto de Alta Investigaci\'{o}n\\
		Universidad de Tarapac\'{a}, Arica, Chile \and
Adriano Da Silva\thanks{ Supported by Fapesp grant $n^{o}$ 2020/12971-4 and CNPq grant n${{}^o}$ 309820/2019-7}\\
		Departamento de Matem\'atica,\\Universidad de Tarapac\'a - Iquique, Chile.
		\and
 Danilo A. Garc\'{\i}a Hern\'andez\thanks{ Supported by Capes Finance Code 001}\\
		Instituto de Matem\'{a}tica\\
		Universidade Estadual de Campinas, Brazil\\
	}
	\date{\today }
	\maketitle
	
	\begin{abstract}
		 In this paper we study Almost-Riemannian Structures (ARS) on the class of nonnilpotent, solvable, conneted 3D Lie groups. The nice structures present in such groups allow us to show that the singular locus of ARSs on such groups are always embedded submanifolds.
	\end{abstract}
	
	{\small {\bf Keywords:} Almost-Riemannian geometry, solvable Lie groups} 
	
	{\small {\bf Mathematics Subject Classification (2020): 22E15, 22E25, 53C17, 53C15.}}%
	
	\section{Introduction}
	
	Almost-Riemannian Structures (ARS) appears in the literature as a part of sub-Riemannian geometry. Roughly speaking, an ARS on a differentiable manifold can be seen, at least locally, as a smooth orthonormal frame that degenerates on a singular set, called the singular locus. They appear naturally when studying limits of Riemmanian metrics or even in the study of certain class of hypoelliptic operators \cite{Gru}. Such concept appeared first in the work \cite{tak} but has aroused interest more recently as shown in the works \cite{Ag0, Ag1, Ag2, Bon1, Bon2, Boscain1, Boscain2, Boscain3, Boscain4, Boscain5}.  
	
    On a connected $n$-dimensional Lie group $G$, an ARS is naturally obtained by considering $n-1$ left-invariant vector fields and one linear vector field. These family of vector fields are assumed to have full rank at a nonempty subset of $G$ and to satisfy the Lie algebra rank condition. The singular locus is the set of points were these vector fields fail to be linearly independent. It is an analytic set, but in general is neither a Lie subgroup nor a submanifold as showed in \cite{PJAyGZ1}. 
	
	In \cite{Boscain5}, the authors study the Laplace-Beltrami operator associated with an ARS. They conclude that in some cases, the singular locus of an ARS works as a barrier for the wave or the heat equations, adding more importance for the study of such subset. In this paper we study the singular locus of ARSs on connected 3D solvable nonnilpotent Lie groups. By general results presented in an Appendix, our task is reduced to the case where the group under consideration is a simply connected, connected group. In order to study the singular locus, we start by describing the group of automorphisms at the algebra and group level of the 3D groups under consideration. Such descriptions allows us to simplify our choices of more ``appropriated'' ARSs. Under these simplification we can show that the singular locus of simple ARSs on these groups are always embedded submanifolds. As a consequence, we are able to obtain some results on the connectedness of the singular locus and also to analyze how exponential curves and the flow of a linear vector field crosses the singular locus. These last results goes in the opposite direction of what happens with the solutions of the wave or heat equations.
    
    The paper is divided as follows: Section 2 is used to set some preliminaries needed on the paper. We define here the class of 3D groups and algebras we are interested and obtain algebraic expressions for the algebra of derivations and the group of automorphisms at algebra and group level. In Section 3 we define simple ARSs on Lie groups. We show here that automorphisms can be seen as isometries. This result allows us to make some assumptions on our ARSs in order to simplify it.  In section 4 we prove our main results. By using the nice structures present on our class of 3D Lie groups, we are able to show that the singular locus of simple ARS on these groups are always submanifolds. This results is not true for all 3D Lie groups as showed in \cite[Section 3.5.3]{PJAyGZ1}. A sufficient condition for the connectedness of the singular locus is also obtained. We finish the section with an analysis of how the exponential curves crosses the singular locus. We finish the paper with two Appendices, the first one containing a result about the asymptotic behavior of $2\times 2$ matrices and the second one showing that simple ARS's on connected Lie groups can be lifted to its simply connected lift. As a consequence, all the results contained in Sections  4 are true for any 3D connected nonnilpotent solvable Lie groups.

	\section{Preliminaries}
	
	In this section we discuss 3D Lie groups and algebras. Our aim here is to introduce and prove some preliminares results concerning automorphisms of algebra and group, invariant and linear vector fields.

	\subsection{Solvable nonnilpotent 3D Lie algebras}
	
	By \cite[Chapter 7]{Oni} any solvable, nonnilpotent 3D Lie algebra is isomorphic to a semi-direct product of the form $\fg(\theta)=\R\times_{\theta}\R^2$ with bracket 
	determined by the relation  
	$$[(a, 0), (0, w)]=(0, a\theta w), \;\;\;(a, w)\in \R\times\R^2,$$
	and $\theta$ a $2\times 2$ matrix of one of the following types:
	$$\left(\begin{array}{cc}
	1  & 1\\ 0 & 1 
	\end{array}\right), \;\;\;\;\left(\begin{array}{cc}
	1  & 0\\ 0 & \lambda 
	\end{array}\right), \lambda\in[-1, 1]\;\;\;\;\mbox{ or }\;\;\;\;\left(\begin{array}{cc}
	\lambda  & -1\\ 1 & \lambda 
	\end{array}\right), \lambda\in\R.$$
	
	Let us remember that an automorphism and a derivation of a Lie algebra $\fg$ are, respectively, linear maps $\phi\in\mathrm{Gl}(\fg)$ and $\DC\in\mathfrak{gl}(\fg)$ satisfying
	$$\forall X, Y\in\fg, \;\;\; \phi[X, Y]=[\phi X, \phi Y]\;\;\;\mbox{ and }\;\;\;\DC[X, Y]=[\DC X, Y]+[X, \DC Y].$$
	Let $M\in\mathfrak{gl}(\fg(\theta))$ and assume that $M\R^2\subset\R^2$. Then, $M$ can be written as 
	$$M=\left(\begin{array}{cc}
	\varepsilon & 0\\ \eta & P
	\end{array}\right), \eta \in\R^2, \varepsilon\in\R \;\mbox{ and }\;P\in\mathfrak{gl}(2, \R).$$
	
	Since $\{0\}\times\R^2$ is the nilradical of $\fg(\theta)$, it is invariant by automorphism and derivations. Hence, if $\phi\in\mathrm{Aut}(\fg(\theta))$ the relation  
	$$\phi[(a, 0), (0, v)]=[\phi(a, 0), \phi(0, v)], \;\;\forall (a, v)\in\R\times\R^2,$$
	implies that
	$$\phi=\left(\begin{array}{cc}
	\varepsilon & 0\\ \eta & P
	\end{array}\right)\in\mathrm{Aut}(\fg(\theta))\;\;\mbox{ if and only if }\;\; P\theta=\varepsilon \theta P.$$
	
	Similarly, if $\DC\in\mathrm{Der}(\mathfrak{g}(\theta))$ the relation 
	$$\DC[(a, 0), (0, v)]=[\DC(a, 0), (0, v)]+[(a, 0), \DC(0, v)],  \;\;\forall (a, v)\in\R\times\R^2$$
	gives us that 
	we get that 
	$$\DC=\left(\begin{array}{cc}
	\varepsilon & 0 \\ \eta & A
	\end{array}\right)\in\mathrm{Der}(\mathfrak{g})\;\;\;\iff\;\;\; A\theta-\theta A=\varepsilon\theta.$$
	The previous calculations imply the following:	
	
	\begin{proposition}
		\label{autoLie}
		It holds that:
		$$\mathrm{Der}(\fg(\theta))=\left\{\left(\begin{array}{cc}
		0 & 0\\ \xi & A
		\end{array}\right), \xi\in\R^2, A\in\mathfrak{gl}(2, \R), \mbox{ with } A\theta=\theta A\right\},$$
		and 

		$$\mathrm{Aut}(\fg(\theta))=\left\{\left(\begin{array}{cc}
		\varepsilon & 0\\ \eta & P
		\end{array}\right), \eta\in\R^2, P\in\mathrm{Gl}(\R^2), \mbox{ with } P\theta=\varepsilon\theta P\right\},$$
		where $\varepsilon=1$ when $\tr(\theta)\neq0$ or $\varepsilon\in\{-1, 1\}$ if $\tr\theta=0$.
		
	\end{proposition}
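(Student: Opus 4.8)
The plan is to take for granted the two pointwise characterizations derived immediately before the statement—that a linear map preserving the nilradical $\{0\}\times\R^2$ is a derivation of $\fg(\theta)$ exactly when it has the block form $\begin{pmatrix}\varepsilon & 0\\ \eta & A\end{pmatrix}$ with $A\theta-\theta A=\varepsilon\theta$, and is an automorphism exactly when it has the form $\begin{pmatrix}\varepsilon & 0\\ \eta & P\end{pmatrix}$ with $P\theta=\varepsilon\theta P$—and to reduce each statement to pinning down the scalar $\varepsilon$. In both cases the lower-left entry ($\eta$, resp.\ $\xi$) is a free parameter, so the only real work is to determine the admissible values of $\varepsilon$ and to translate the commutation relation accordingly.

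For the derivation part, I would argue that $\varepsilon=0$ is forced. Taking the trace of the defining relation $A\theta-\theta A=\varepsilon\theta$ and using $\tr(A\theta-\theta A)=0$ yields $\varepsilon\,\tr(\theta)=0$, which already gives $\varepsilon=0$ whenever $\tr(\theta)\neq0$. The remaining case $\tr(\theta)=0$ is handled by inspecting the three normal forms: a direct check shows that $\tr(\theta)=0$ occurs only for $\theta=\mathrm{diag}(1,-1)$ and for $\begin{pmatrix}0&-1\\1&0\end{pmatrix}$, both of which are invertible. Conjugating the relation by $\theta$ then gives $A-\theta A\theta^{-1}=\varepsilon\,\mathrm{Id}$, and taking the trace produces $0=2\varepsilon$, so again $\varepsilon=0$. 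With $\varepsilon=0$ the relation $A\theta-\theta A=\varepsilon\theta$ becomes exactly $A\theta=\theta A$, and renaming $\eta$ as $\xi$ delivers the claimed description of $\mathrm{Der}(\fg(\theta))$.

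For the automorphism part, invertibility of $\begin{pmatrix}\varepsilon & 0\\ \eta & P\end{pmatrix}$ forces $\varepsilon\neq0$ and $P\in\mathrm{Gl}(\R^2)$, so the relation $P\theta=\varepsilon\theta P$ may be rewritten as the similarity $P\theta P^{-1}=\varepsilon\theta$; in particular $\theta$ and $\varepsilon\theta$ are conjugate. Comparing traces gives $\tr(\theta)=\varepsilon\,\tr(\theta)$, hence $\varepsilon=1$ as soon as $\tr(\theta)\neq0$. Comparing determinants gives $\det(\theta)=\varepsilon^{2}\det(\theta)$; since, as noted above, $\tr(\theta)=0$ forces $\det(\theta)\neq0$ among our normal forms, we obtain $\varepsilon^{2}=1$, i.e.\ $\varepsilon\in\{-1,1\}$, in precisely the trace-zero case. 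This yields both the block description of $\mathrm{Aut}(\fg(\theta))$ and the stated constraint on $\varepsilon$.

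I expect the only genuinely delicate point to be the trace-zero analysis: the clean trace argument collapses exactly when $\tr(\theta)=0$, and it is there that one must fall back on the explicit list of normal forms to verify that $\theta$ is invertible, thereby unlocking the determinant argument (for automorphisms) and the conjugation argument (for derivations). Everything else is a direct consequence of the pointwise characterizations already in hand.
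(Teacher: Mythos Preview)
Your proposal is correct and follows essentially the same approach as the paper: both reduce the problem to determining the scalar $\varepsilon$ via trace arguments, and both handle the trace-zero case by observing that the relevant normal forms of $\theta$ are invertible. Your determinant comparison to force $\varepsilon^2=1$ in the automorphism case is actually more explicit than the paper's own argument, which simply asserts the dichotomy $\varepsilon\in\{-1,1\}$ after noting $\det\theta\neq 0$.
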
 
	
	\begin{proof}
		By our previous calculations,
		$$\phi=\left(\begin{array}{cc}
		\varepsilon & 0\\ \eta & P
		\end{array}\right)\in\mathrm{Aut}(\fg(\theta))\;\;\mbox{ if and only if }\;\;P\theta=\varepsilon \theta P.$$
		Moreover, the fact that $\phi$ is invertible implies that $\varepsilon\det P=\det \phi\neq0$ and hence,  
		$$P\theta =\varepsilon \theta P\;\;\;\iff\;\;\;\varepsilon\theta=P\theta P^{-1}.$$
		Assuming $\tr \theta\neq 0$ gives us that
		$$\varepsilon\tr(\theta)=\tr(P\theta P^{-1})=\tr(\theta)\;\;\implies\;\;\varepsilon=1\;\;\mbox{ and }\;\;P\theta =\theta P.$$
		On the other hand, $\tr(\theta)=0$ implies necessarily
		$$\theta=\left(\begin{array}{cc}
		1 & 0 \\ 0 & -1
		\end{array}\right)\;\;\;\mbox{ or }\;\;\;\theta=\left(\begin{array}{cc}
		0 & -1 \\ 1 & 0
		\end{array}\right),$$
		and in both cases $\det(\theta)\neq 0$. Therefore, 
		$$\varepsilon=1\;\;\mbox{ and }\;\;P\theta =\theta P\;\;\mbox{ or }\;\;\varepsilon=-1\;\;\mbox{ and }\;\;P\theta=-\theta P.$$
		For a derivation $\DC\in\mathrm{Der}(\fg(\theta))$ we have that 
		$$\DC=\left(\begin{array}{cc}
		\varepsilon & 0\\ \xi & A
		\end{array}\right)\in\mathrm{Der}(\fg(\theta))\;\;\mbox{ if and only if }\;\;A\theta-\theta A=\varepsilon \theta.$$
		Hence, $\tr\theta\neq0$ gives us 
		$$\varepsilon\tr\theta=\tr(A\theta-\theta A)=0\;\;\implies\;\;\varepsilon=0.$$
		On the other hand, if $\tr\theta=0$ then necessarily $\det\theta\neq 0$ and
		$$A\theta-\theta A=\varepsilon\theta\;\;\implies\;\; \varepsilon\id_{\R^2}=A-\theta A\theta^{-1}\;\;\implies\;\;2\varepsilon=\tr(A-\theta A\theta^{-1})=0,$$
		showing that, in any case, $\varepsilon=0$ and $A\theta=\theta A$, concluding the proof.
	\end{proof}
	
	\subsection{Solvable nonnilpotent simply connected connected 3D Lie groups}
	
    Up to isomorphisms, the connected, simply connected Lie group associated with the 3D Lie algebra $\fg(\theta)$ is the semi-direct product $G(\theta)=\R\times_{\rho}\R^2$, where $\rho_t=\rme^{t\theta}$ and the product satisfies
    $$(t_1, v_1)(t_2, v_2)=(t_1+t_2, v_1+\rho_{t_1}v_2), \;\;\; (t_1, v_1), (t_2, v_2)\in\R\times_{\rho}\R^2.$$
    
    As for the algebras $\fg(\theta)$, the group of automorphisms of $G(\theta)$ can be expressed in terms of $3\times 3$ matrices. In order to do that we introduce the following operator: Let $A$ be a $2\times 2$ and define  
    $$\Lambda^A:\R\times \R^2\rightarrow\R^2, \;\;\;(t, w)\mapsto \Lambda_t^Aw:=\int_0^t\rme^{sA}w \,ds.$$
    The operator $\Lambda^A$ is well defined and, for any $t, s\in\R$, it satisfies
      
    $$\hspace{-2.5cm}1.\;\Lambda^A_0=0;\hspace{3cm} 2.\; \frac{d}{dt}\Lambda^A_t=\rme^{tA}\hspace{2.2cm} 3.\; \Lambda^A_{t+s}=\Lambda^A_t+\rme^{tA}\Lambda^A_s$$
         $$4.\;\rme^{tA}-A\Lambda^A_t=\id_{\R^2}\hspace{1.5cm} 5.\;\rme^{sA}\Lambda^A_t=\Lambda^A_t\rme^{sA}\hspace{1.5cm} 6.\; \Lambda_t^A=(\rme^{tA}-\id_{\R^2})A^{-1} \mbox{ if }\det A\neq 0$$
         $$7.\;A=\left(\begin{array}{cc}
         \lambda & 0 \\ 0 & 0
         \end{array}\right), \;\lambda\neq0\;\;\;\implies\;\;\;\Lambda_s^A=\left(\begin{array}{cc}
         \frac{1}{\lambda}(\rme^{s\lambda}-1) & 0 \\ 0 & s
         \end{array}\right).$$

    Using the previous operator and Proposition \ref{autoLie} we have the following:    
   	
   	\begin{proposition}
   		\label{autoG}
	   	For the 3D Lie group $G(\theta)$, it holds that:
   		$$\mathrm{Aut}(G(\theta))=\left\{\phi(t, v)=\left(\varepsilon t, Pv+\varepsilon\Lambda_{\varepsilon t}^{\theta}\eta\right), \eta\in\R^2, P\in\mathrm{Gl}(\R^2), \mbox{ with } P\theta=\varepsilon\theta P\right\},$$
   		where $\varepsilon=1$ if $\tr\theta\neq 0$ or $\varepsilon\in\{-1, 1\}$ if $\tr\theta=0$. 
   	\end{proposition}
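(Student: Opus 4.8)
The plan is to exploit the fact that $G(\theta)$ is connected and simply connected, so that differentiation at the identity $(0,0)$ yields a group isomorphism $\mathrm{Aut}(G(\theta))\to\mathrm{Aut}(\fg(\theta))$, $\phi\mapsto d\phi_{(0,0)}$, whose inverse is the integration of Lie algebra automorphisms to group automorphisms. Since Proposition \ref{autoLie} already describes $\mathrm{Aut}(\fg(\theta))$ completely, it suffices to verify that the maps $\phi$ written in the statement are exactly the integrals of the matrices $\Phi=\left(\begin{smallmatrix}\varepsilon & 0\\ \eta & P\end{smallmatrix}\right)$ appearing there; the restriction on $\varepsilon$ is then inherited directly from Proposition \ref{autoLie}.

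First I would fix such a $\Phi$, set $\phi(t,v)=\left(\varepsilon t,\,Pv+\varepsilon\Lambda^{\theta}_{\varepsilon t}\eta\right)$, and check that $\phi$ is a group homomorphism. Writing out both sides of $\phi\big((t_1,v_1)(t_2,v_2)\big)=\phi(t_1,v_1)\phi(t_2,v_2)$ via the product law of $G(\theta)$, the first coordinates agree trivially, while the equality of second coordinates reduces, after cancellation, to two independent requirements: a $v_2$-part demanding $P\rme^{t_1\theta}=\rme^{\varepsilon t_1\theta}P$, and an $\eta$-part demanding $\Lambda^{\theta}_{\varepsilon(t_1+t_2)}=\Lambda^{\theta}_{\varepsilon t_1}+\rme^{\varepsilon t_1\theta}\Lambda^{\theta}_{\varepsilon t_2}$. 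The first follows by expanding the exponential once one notes that $P\theta=\varepsilon\theta P$ gives $P\theta^nP^{-1}=\varepsilon^n\theta^n$, hence $P\rme^{t\theta}P^{-1}=\rme^{\varepsilon t\theta}$. The second is precisely property $3$ of the operator $\Lambda$ applied with $A=\theta$, $t=\varepsilon t_1$ and $s=\varepsilon t_2$. This homomorphism verification, which hinges on the interplay between the relation $P\theta=\varepsilon\theta P$ and the cocycle identity (property $3$), is the main computational step.

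Next I would compute $d\phi_{(0,0)}$. Differentiating $t\mapsto\varepsilon\Lambda^{\theta}_{\varepsilon t}\eta$ by property $2$ gives $\varepsilon^2\rme^{\varepsilon t\theta}\eta$, which at $t=0$ equals $\varepsilon^2\eta=\eta$ since $\varepsilon^2=1$ in every case. Thus the Jacobian of $\phi$ at the identity is exactly $\left(\begin{smallmatrix}\varepsilon & 0\\ \eta & P\end{smallmatrix}\right)=\Phi$. Because $\Phi$ is invertible, the simply connected correspondence guarantees that $\phi$ is indeed an automorphism of $G(\theta)$, and by uniqueness of the integration it is \emph{the} automorphism with differential $\Phi$; invertibility therefore requires no separate argument (one could, if desired, also exhibit $\phi^{-1}$ as the map associated with $\Phi^{-1}$). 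As $\Phi$ runs over $\mathrm{Aut}(\fg(\theta))$ the maps $\phi$ run over all of $\mathrm{Aut}(G(\theta))$, which gives the claimed description together with the stated constraint on $\varepsilon$.
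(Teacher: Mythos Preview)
Your proposal is correct and follows essentially the same route as the paper: both arguments exploit the isomorphism $\mathrm{Aut}(G(\theta))\cong\mathrm{Aut}(\fg(\theta))$ for the simply connected group, write down the candidate map $\psi(t,v)=(\varepsilon t,\,Pv+\varepsilon\Lambda^{\theta}_{\varepsilon t}\eta)$, verify directly that it is a homomorphism using $P\rme^{t\theta}=\rme^{\varepsilon t\theta}P$ together with property~3 of $\Lambda^{\theta}$, and check that its differential at the identity is the prescribed $\Phi$. The only cosmetic difference is that the paper starts from an arbitrary $\phi\in\mathrm{Aut}(G(\theta))$ and matches it to $\psi$, whereas you start from $\Phi\in\mathrm{Aut}(\fg(\theta))$ and integrate; these are the two directions of the same bijection.
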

   	
   	\begin{proof}
   		If $\phi\in\mathrm{Aut}(G(\theta))$ we have that $(d\phi)_{(0, 0)}\in\mathrm{Aut}(\fg(\theta))$. Therefore, we can write 
   		$$(d\phi)_{(0, 0)}=\left(\begin{array}{cc}
   		\varepsilon & 0 \\ \eta & P
   		\end{array}\right), \;\;\mbox{ with }\;\; P\theta=\varepsilon \theta P\;\;\mbox{ and }\;\;\varepsilon^2=1.$$
   		The map $\psi(t, v)=\left(\varepsilon t, Pv+\varepsilon\Lambda^{\theta}_{\varepsilon t}\eta\right)$ satifies $(d\psi)_{(0, 0)}=(d\phi)_{(0, 0)}$ and, since $G(\theta)$ is connected, if we show that $\psi\in\mathrm{Aut}(G(\theta))$, the equality $\psi=\phi$ follows by general results.
   		For any $(t_1, v_1), (t_2, v_2)\in G(\theta)$ it holds that
   		$$\psi((t_1, v_1)(t_2, v_2))=\psi(t_1+t_2, v_1+\rho_{t_1}v_2)=\left(\varepsilon(t_1+t_2), P(v_1+\rho_{t_1}v_2)+\varepsilon\Lambda^{\theta}_{\varepsilon(t_1+t_2)}\eta\right)$$
   		$$=\left(\varepsilon t_1+\varepsilon t_2, Pv_1+\rho_{\varepsilon t_1}Pv_2+\varepsilon\Lambda^{\theta}_{\varepsilon t_1}\eta+\varepsilon\rho_{\varepsilon t_1}\Lambda^{\theta}_{t_2}\eta\right)=
   		\left(\varepsilon t_1+\varepsilon t_2, Pv_1+\varepsilon\Lambda^{\theta}_{\varepsilon t_1}\eta+\rho_{\varepsilon t_1}\left(Pv_2+\varepsilon\Lambda^{\theta}_{\varepsilon t_2}\eta\right)\right)$$
   		$$=(\varepsilon t_1, Pv_1+\varepsilon\Lambda^{\theta}_{\varepsilon t_1}\eta)(\varepsilon t_2, Pv_2+\varepsilon\Lambda^{\theta}_{\varepsilon t_2}\eta)=\psi(t_1, v_1)\psi(t_2, v_2),$$
   		where for the second equality we used that $P\theta=\varepsilon\theta P$ and for the third equality a property of $\Lambda^{\theta}_t$. Therefore, $\psi\in\mathrm{Aut}(G(\theta))$ and consequently $\psi=\phi$ as stated. 			
   	\end{proof}
   	
   	\begin{definition}
   		\label{linear}
    	A vector field $\XC$ on $G(\theta)$ is said to be {\bf linear} if it is complete and its flow $\{\varphi_s\}_{s\in\R}$ is a $1$-parameter subgroup of $\mathrm{Aut}(G(\theta))$.
     	\end{definition}

   	Since by differentiation, $\{(d\varphi_s)_{(0, 0)}\}_{s\in\R}$ is a $1$-parameter subgroup of $\mathrm{Aut}(\fg(\theta))$, there exists a derivation $\DC\in\mathrm{Der}(\fg(\theta))$ such that 
   	$$\forall s\in\R.\;\;\;\;(d\varphi_s)_{(0, 0)}=\rme^{s\DC}.$$
   	However, 
   	$$\DC=\left(\begin{array}{cc}
   	0 & 0 \\ \xi & A
   	\end{array}\right)\;\;\;\implies\;\;\;\rme^{s\DC}=\left(\begin{array}{cc}
   	1 & 0 \\ \Lambda^A_s\xi & \rme^{sA}
   	\end{array}\right),$$
   	and hence, by the previous calculations, we get that  
   	$$\varphi_s(t, v)=\left(t, \rme^{sA}v+\Lambda_t^{\theta}\Lambda_s^A\xi\right), \;\;\xi\in\R^2, A\in\mathfrak{gl}(2, \R), \mbox{ with }  A\theta=\theta A.$$
   	Also, the linear vector field $\XC$ can be written as
   	$$\XC(t, v)=(0, Av+\Lambda^{\theta}_t\xi), \;\;\;\xi\in\R^2, A\in\mathfrak{gl}(2, \R), \mbox{ with }  A\theta=\theta A.$$
   	By the fact that the vector $\xi\in\R^2$ and the matrix $A\in\mathfrak{gl}(2, \R)$, together with the previous properties, determine $\XC$, we will usually write $\XC=(\xi, A)$ to denote the linear vector field $\XC$.
   	
   	We denote by $\ZC_{\XC}$ the set of the {\bf singularities} of a linear vector field $\XC$, that is,  
   	$$\ZC_{\XC}=\{(t, v)\in G(\theta); \;\;\XC(t, v)=0\}.$$
   	It is well know that $\ZC_{\XC}$ is a closed subgroup with Lie algebra given by $\ker\DC$.
   	   
     \bigskip 
   	  
 	We finish this section with the explicity expression for the exponential map of a 3D Lie group and the differential of the left and right translations which will be needed ahead. The details in how to obtain such formulas can be found in \cite[Section 3]{DSAy1}.
	$$
	\exp (a, w)=\left\{\begin{array}{cc}
	(0, w)  & \mbox{ if }a=0\\
	\left(a, \frac{1}{a}\Lambda^{\theta}_aw\right), & \mbox{ if }a\neq 0		
	\end{array}\right..
	$$
	Also, for any $(t_i, v_i)\in G(\theta)$, $i=1, 2$ and $(a, w)\in\fg(\theta)$ the left and right translations satisfies  
	$$(dL_{(t_1, v_1)})_{(t_2, v_2)}(a, w)=(a, \rho_{t_1}w)\;\;\mbox{ and }\;\;(dR_{(t_1, v_1)})_{(t_2, v_2)}(a, w)=(a, w+a\theta\rho_{t_2}v_1),$$
	implying that the left and right invariant vector field associated with $Y=(a, w)\in\fg(\theta)$ are given, respectively, by
	$$Y^L(t, v)=(a, \rho_tw)\;\;\;\;\mbox{ and }\;\;\;\;Y^R(t, v)=(a, w+a\theta v).$$


		\section{Simple ARS's on $G(\theta)$}
		
	   In this section we define simple ARS's on the connected simply connected 3D Lie groups $G(\theta)$ associated with the Lie algebras $\fg(\theta)$.

		\subsection{Invariant distributions and ARS's on $G(\theta)$}

		A 2D left-invariant distribution on $G(\theta)$ is the map $\Delta^L:G(\theta)\rightarrow TG(\theta)$ given by 
		$$\Delta^L(t, v)=(dL_{(t, v)})_{(0, 0)}\Delta,$$
		where $\Delta\subset\mathfrak{g}(\theta)$ is a 2D vector subspace. We can endow $\Delta^L$ with a left-invariant Euclidean metric by considering on $\Delta$ an inner product $\langle\cdot, \cdot\rangle$ and defining 
		$$\forall X, Y\in \Delta^L(t, v), \;\;\; \langle X, Y\rangle_{(t, v)}:=\langle (dL_{(t, v)^{-1}})_{(t, v)}X, (dL_{(t, v)^{-1}})_{(t, v)}Y\rangle.$$
			Let $\XC$ be a linear vector field and consider the family of vector fields $\Sigma=\{\XC, \Delta^L\}$. We say that $\Sigma$ satisfies the {\bf Lie algebra rank condition (LARC)} if at least one of the following conditions is satisfied: 
			\begin{itemize}
				\item[(i)] $\Delta$ is not a subalgebra of $\fg(\theta)$;
				\item[(ii)] $\Delta$ is a subalgebra of $\fg(\theta)$ and $\DC\Delta\not\subset\Delta$,
			\end{itemize}
			where $\DC$ is the derivation associated with $\XC$.
			
			\begin{remark}
			Note that the family $\Sigma=\{\XC, \Delta^L\}$, with $\Delta=\{0\}\times\R^2$, does not satisfies the LARC. In fact, in this case $\Delta$ is a subalgebra and 
			$$\DC(0, w)=\left(\begin{array}{cc}
			0 & 0 \\ \xi & A
			\end{array}\right)\left(\begin{array}{c}
			0 \\ w
			\end{array}\right)=\left(\begin{array}{c}
			0 \\ Aw
			\end{array}\right)=(0, Aw)\;\;\implies\;\;\DC\Delta\subset\Delta.$$
			
			Therefore, if $\Sigma=\{\XC, \Delta^L\}$ satisfies the LARC the subspace $\Delta$ admits a basis  
			$$\{ (\sigma_1, u_1), (\sigma_2, u_2)\}, \;\;\;\mbox{ with }\;\;\;\sigma_1^2+\sigma_2^2\neq0.$$
			Moreover, the intersection $\Delta\cap(\{0\}\times\R^2)$ is one-dimensional. Let us denote by $l_{\Delta}$ the line in $\R^2$ satisfying 
			$$\{0\}\times l_{\Delta}=\Delta\cap(\{0\}\times\R^2).$$
           \end{remark}

		\begin{definition}
			\label{ARS}
			A simple ARS $\Sigma$ on the 3D Lie group $G(\theta)$ is defined by the family $\Sigma=\{\XC, \Delta^{L}\}$, where $\XC$ is a linear vector field and $\Delta^{L}$ is a 2D left-invariant distribution $\Delta$ endowed with a left-invariant Euclidean metric satisfying  
			\begin{itemize}
				\item[(i)] The set $\{(t, v)\in G(\theta);\;\;\XC(t, v)\notin\Delta^L(t, v)\}$ is nonempty;
				\item[(ii)] $\Sigma$ satisfies the LARC.
			\end{itemize}
			The metric on $G(\theta)$ associated with $\Sigma$ is defined by declaring that $\XC$ is unitary and orthogonal to $\Delta^L$ at every point.
		\end{definition}

		The {\bf singular points} $\ZC$ is the set of points $(t, v)\in G(\theta)$ where the previous metric fails to be Riemannian, that is
		$$\ZC:=\{(t, v)\in G(\theta); \;\XC(t, v)\in \Delta^L(t, v)\}.$$
		In particular, the set of singularities $\ZC_{\XC}$ of $\XC$ is contained in $\ZC$.

		The {\bf almost-Riemannian norm} defined by $\Sigma$ on $T_{(t, v)}G(\theta)$ is defined by
		$$\|X\|_{\Sigma, (t, v)}=\min\left\{\sqrt{\alpha_0^2+\alpha_1^2+\alpha_2^2}; \;\alpha_0\XC(t, v)+\alpha_1Y_1^L(t, v)+\alpha_2Y^L_2(t, v)=X\right\},$$
		where $\{Y_1, Y_2\}$ is an orthonormal basis of $\Delta$. It holds that $\|X\|_{\Sigma, (t, v)}=\infty$ when $(t, v)\in\ZC$ and $X\notin\Delta(t, v)$. 
		
		A diffeomorphism $\psi:G(\theta)\rightarrow G(\theta)$ between two ARS's $\Sigma_1$ and $\Sigma_2$ on a 3D Lie group $G(\theta)$ is an {\bf isometry} if   
		$$\forall (t, v)\in G(\theta), Z\in T_{(t, v)}G(\theta)\hspace{1.5cm}\|(d\psi)_{(t, v)}Z\|_{\Sigma_2, \phi(t, v)}=\|Z\|_{\Sigma_1, (t, v)}.$$

  	The next result shows that elements in $\mathrm{Aut}(G(\theta))$ can be seen as isometries between ARSs. This will help us to simplify, when necessary, the ARS under consideration.
		
		\begin{proposition}
			\label{difeo}
			Let $\Sigma=\{\XC=(\xi, A), \Delta^L\}$ be an ARS on $G(\theta)$ and 
			$$\psi(t, v)=(\varepsilon t, Pv+\varepsilon\Lambda_{\varepsilon t}^{\theta}\eta)\;\;\mbox{ an automophism of }G(\theta).$$
			The family 
			$$\Sigma_{\psi}=\left\{\XC_{\psi}=\left( P^{-1}(\varepsilon\xi+ A\eta), P^{-1}AP\right), \Delta^L_{\psi}:=(\psi_*)^{-1}\Delta^L\right\},$$
			is an ARS and $\psi$ is an isometry between $\Sigma_{\psi}$ and $\Sigma$, where the left-invariant metric on $\Delta_{\psi}$ is the one that makes $(d\phi)_{(0, 0)}\bigl|_{\Delta_{\phi}}$ an isometry.
		\end{proposition}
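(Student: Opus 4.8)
The plan is to recognize that $\psi$ does nothing more than conjugate the entire structure: the formula defining $\Sigma_\psi$ records precisely the pullbacks $(\psi_*)^{-1}\XC$ and $(\psi_*)^{-1}\Delta^L$, so that $\psi$ intertwines $\Sigma_\psi$ with $\Sigma$ at the level of the defining frames. Once this is made explicit, both the ARS axioms and the isometry follow formally.

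First I would make the transport formulas explicit. By Proposition \ref{autoLie} we may write $(d\psi)_{(0,0)}=\left(\begin{array}{cc}\varepsilon & 0\\ \eta & P\end{array}\right)$ with $P\theta=\varepsilon\theta P$ and $\varepsilon^2=1$, and a direct inversion gives $(d\psi)_{(0,0)}^{-1}=\left(\begin{array}{cc}\varepsilon & 0\\ -\varepsilon P^{-1}\eta & P^{-1}\end{array}\right)$. Conjugating the derivation $\DC=\left(\begin{array}{cc}0&0\\ \xi&A\end{array}\right)$ of $\XC$ by this matrix yields $(d\psi)_{(0,0)}^{-1}\DC\,(d\psi)_{(0,0)}=\left(\begin{array}{cc}0&0\\ P^{-1}(\varepsilon\xi+A\eta)&P^{-1}AP\end{array}\right)$, which is exactly the derivation of $\XC_\psi=(P^{-1}(\varepsilon\xi+A\eta),P^{-1}AP)$. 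Since the flow of $(\psi_*)^{-1}\XC$ is $\psi^{-1}\circ\varphi_s\circ\psi$, a one-parameter group of automorphisms whose differential at the identity is $(d\psi)_{(0,0)}^{-1}\rme^{s\DC}(d\psi)_{(0,0)}=\rme^{s(d\psi)_{(0,0)}^{-1}\DC(d\psi)_{(0,0)}}$, this computation shows that $\XC_\psi=(\psi_*)^{-1}\XC$ is indeed linear with the asserted data. Similarly, $\Delta_\psi:=(d\psi)_{(0,0)}^{-1}\Delta$ is a 2D subspace and one checks that $(\psi_*)^{-1}\Delta^L=\Delta_\psi^L$.

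The engine behind both identities is the intertwining relation coming from $\psi(gh)=\psi(g)\psi(h)$: differentiating $\psi\circ L_g=L_{\psi(g)}\circ\psi$ at the identity gives, for every $W\in\fg(\theta)$, the two relations $(d\psi)_g\,W^L(g)=((d\psi)_{(0,0)}W)^L(\psi(g))$ and $(d\psi)_g\,\XC_\psi(g)=\XC(\psi(g))$. With these I would transport the two ARS conditions. Because $(d\psi)_{(0,0)}$ is a Lie algebra automorphism, $\Delta_\psi$ is a subalgebra iff $\Delta$ is, and $(d\psi)_{(0,0)}^{-1}\DC(d\psi)_{(0,0)}\,\Delta_\psi=(d\psi)_{(0,0)}^{-1}\DC\Delta$, so the LARC for $\Sigma_\psi$ is equivalent to the LARC for $\Sigma$; likewise $\XC_\psi(q)\notin\Delta_\psi^L(q)$ iff $\XC(\psi(q))\notin\Delta^L(\psi(q))$, so condition (i) transfers through the bijection $\psi$. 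Hence $\Sigma_\psi$ is an ARS.

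Finally, for the isometry I would match the two minimization problems defining the almost-Riemannian norms term by term. Fix $q=(t,v)$ and an orthonormal basis $\{W_1,W_2\}$ of $\Delta_\psi$; by the very choice of the metric on $\Delta_\psi$, the images $\tilde Y_i:=(d\psi)_{(0,0)}W_i$ form an orthonormal basis of $\Delta$. Applying $(d\psi)_q$ to a decomposition $Z=\alpha_0\XC_\psi(q)+\alpha_1 W_1^L(q)+\alpha_2 W_2^L(q)$ and using the two intertwining identities turns it into $(d\psi)_qZ=\alpha_0\XC(\psi(q))+\alpha_1\tilde Y_1^L(\psi(q))+\alpha_2\tilde Y_2^L(\psi(q))$, with the \emph{same} coefficients. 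Thus the admissible triples $(\alpha_0,\alpha_1,\alpha_2)$ for $Z$ in $\Sigma_\psi$ and for $(d\psi)_qZ$ in $\Sigma$ coincide, forcing $\|Z\|_{\Sigma_\psi,q}=\|(d\psi)_qZ\|_{\Sigma,\psi(q)}$, which is precisely the isometry condition. I expect the only delicate point to be the bookkeeping that distinguishes pushforward from pullback and pins down the explicit formula for $\XC_\psi$; once the intertwining relation is isolated, the transfer of the ARS axioms and the equality of norms are formal.
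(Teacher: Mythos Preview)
Your argument is correct and reaches the same conclusion as the paper, but the route to the key conjugacy $(d\psi)_{g}\XC_{\psi}(g)=\XC(\psi(g))$ differs. The paper verifies this identity by a direct coordinate computation: it writes out $(d\psi)_{(t,v)}\XC_{\psi}(t,v)$ explicitly and simplifies using $P\theta=\varepsilon\theta P$ (hence $P\Lambda_t^{\theta}P^{-1}=\varepsilon\Lambda_{\varepsilon t}^{\theta}$) and $A\theta=\theta A$ until it matches $\XC(\psi(t,v))$. You instead identify $\XC_{\psi}$ as $(\psi_*)^{-1}\XC$ abstractly, by observing that its flow $\psi^{-1}\circ\varphi_s\circ\psi$ is a one-parameter group of automorphisms whose derivation is the matrix conjugate $(d\psi)_{(0,0)}^{-1}\DC\,(d\psi)_{(0,0)}$, and then read off the data $(P^{-1}(\varepsilon\xi+A\eta),P^{-1}AP)$ from that $3\times 3$ product. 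Your approach is cleaner and makes transparent why the formula for $\XC_{\psi}$ has the shape it does; the paper's approach, in exchange, is self-contained and does not rely (even implicitly) on simple connectedness of $G(\theta)$ to pass from equal derivations to equal linear fields. Your treatment of the ARS axioms (transferring LARC and condition (i) through the Lie-algebra automorphism and the bijection $\psi$) is more explicit than the paper's, which simply asserts that $\Sigma_{\psi}$ is an ARS once conjugacy is established; and your isometry argument via matching admissible triples in the norm minimization is essentially a spelled-out version of the paper's remark that $(d\psi)_{(t,v)}$ carries orthonormal frames to orthonormal frames.
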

		
		\begin{proof}
			By definition, we have that  
			$$(d\psi)_{(t, v)}\XC_{\psi}(t, v)=\left(\begin{array}{cc}
			\varepsilon & 0 \\ \eta & P
			\end{array}\right)\left(\begin{array}{c}
			0 \\ P^{-1}APv+\Lambda_t^{\theta}P^{-1}(\varepsilon\xi+A\eta)
			\end{array}\right)=\left(\begin{array}{c}
			0 \\ APv+P\Lambda_t^{\theta}P^{-1}(\varepsilon\xi+ A\eta)
			\end{array}\right)$$
			$$=\left(\begin{array}{c}
			0 \\ APv+\varepsilon\Lambda_{\varepsilon t}^{\theta}(\varepsilon\xi+A\eta)
			\end{array}\right)=\left(\begin{array}{c}
			0 \\ A(Pv+\varepsilon\Lambda_{\varepsilon t}^{\theta}\eta)+\Lambda_{\varepsilon t}^{\theta}\xi
			\end{array}\right)=\XC(\varepsilon t, Pv+\varepsilon\Lambda_{\varepsilon t}^{\theta}\eta)=\XC(\psi(t, v)),$$
			where in the third equality we used that 
			$$P\theta=\varepsilon\theta P\;\;\;\;\implies\;\;\;\;P\rme^{t\theta}=\rme^{\varepsilon t\theta}P\;\;\;\;\mbox{ and }\;\;\;\;P\Lambda_t^{\theta}P^{-1}=\varepsilon\Lambda_{t\varepsilon}^{\theta},$$
			and in the fourth equality that $A\theta=\theta A$, showing that $\XC_{\psi}$ and $\XC$ are $\psi$-conjugated. Since automorphisms preserves left-invariant vector fields, it holds that $\Sigma_{\psi}$ is in fact an ARS on $G(\theta)$. Moreover, if we define the left-invariant metric on $\Delta_{\psi}$ that makes $(d\psi)_{(0, 0)}\bigl|_{\Delta_{\psi}}$ an isometry, we get that $(d\psi)_{(t, v)}$ carries orthonormal frames in $\Delta_{\psi}(t, v)$ onto orthonormal frames in $\Delta(\psi(t, v))$ implying that $\psi$ is in fact an isometry between $\Sigma_{\psi}$ and $\Sigma.$
		\end{proof}
		
		\bigskip
		\begin{remark}
			\label{simplification}
   Let $\Sigma=\{\XC=(\xi, A), \Delta^L\}$ be an ARS on $G(\theta)$. The maps
	   $$\psi_1(t, v)=\left(t, v-\Lambda_t^{\theta}(A^{-1}\xi_1)\right), \hspace{.5cm}\mbox{ if }\hspace{.5cm}\det A\neq 0,$$
	   and 
	   $$\psi_2(t, v)=\left(t, v-\frac{1}{\sigma}\Lambda_t^{\theta}u\right), \hspace{.5cm}\mbox{ where }\hspace{.5cm}(\sigma, u)\in\Delta\;\mbox{ with }\;\sigma\neq 0,$$
	   are automorphisms of $G(\theta)$ and their induced ARS's satisfies $\XC_{\psi_1}=(0, A)$ and $(1, 0)\in\Delta_{\psi_2}$. Therefore, up to automorphisms we can assume that $(1, 0)\in\Delta$ {\bf or} $\xi=0$ if $\det A=0$.	   
		\end{remark}

        \bigskip
        
        Next we analyze some properties of the singular locus of ARS's on the groups $G(\theta)$.

		\begin{proposition}
			\label{LARC}
			For the ARS $\Sigma=\{\XC=(\xi, A), \Delta^L\}$ on $G(\theta)$, it holds:
			\begin{enumerate}
				\item $\Delta$ is a subalgebra if and only if $l_{\Delta}$ is an eigenspace of $\theta$;
				\item If $(1, 0)\in\Delta$, then $\Sigma$ satisfies the LARC if and only if $\Delta$ is not a subalgebra or $\Delta$ is a subalgebra and $Al_{\Delta}\not\subset l_{\Delta}$ or $\xi\notin l_{\Delta}$;
			\end{enumerate}	
		\end{proposition}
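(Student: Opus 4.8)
The plan is to reduce both statements to direct bracket and derivation computations in an explicit basis of $\fg(\theta)$, exploiting that the nilradical $\{0\}\times\R^2$ is abelian and invariant, so that every relevant object lands inside it. Throughout I work under the standing assumption that $\Delta$ projects nontrivially onto the first factor $\R$ (otherwise $\Delta=\{0\}\times\R^2$ and $l_{\Delta}$ is not a line); this is automatic under the LARC thanks to the basis condition $\sigma_1^2+\sigma_2^2\neq0$ recorded in the remark before Definition~\ref{ARS}. Under this assumption $\Delta\cap(\{0\}\times\R^2)=\{0\}\times l_{\Delta}$ is one-dimensional, so I would fix a basis of $\Delta$ of the form $\{(1,w),(0,u)\}$, where $u$ spans $l_{\Delta}$ and $(1,w)$ is any element of $\Delta$ projecting to $1\in\R$.

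For the first item, the only bracket I need to examine is $[(1,w),(0,u)]$, since the bracket of either basis vector with itself vanishes. Expanding by bilinearity and using $[(0,\cdot),(0,\cdot)]=0$ together with $[(a,0),(0,u)]=(0,a\theta u)$, I would obtain $[(1,w),(0,u)]=(0,\theta u)$. This lies in the nilradical, so it belongs to $\Delta$ if and only if $\theta u\in l_{\Delta}$; as $u$ spans $l_{\Delta}$, this is precisely the statement that $l_{\Delta}$ is $\theta$-invariant, i.e.\ a (real) eigenspace of $\theta$. Since all remaining brackets are trivially in $\Delta$, this shows that $\Delta$ is a subalgebra if and only if $l_{\Delta}$ is an eigenspace of $\theta$.

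For the second item, I would first recall that by the definition of the LARC this condition fails exactly when $\Delta$ is a subalgebra \emph{and} $\DC\Delta\subset\Delta$; hence it suffices to prove that, when $(1,0)\in\Delta$ and $\Delta$ is a subalgebra, one has $\DC\Delta\subset\Delta$ if and only if $Al_{\Delta}\subset l_{\Delta}$ and $\xi\in l_{\Delta}$. Using $(1,0)\in\Delta$ I would take the basis $\{(1,0),(0,u)\}$ with $u$ spanning $l_{\Delta}$, and apply the matrix form $\DC=\left(\begin{smallmatrix}0&0\\\xi&A\end{smallmatrix}\right)$, which gives $\DC(1,0)=(0,\xi)$ and $\DC(0,u)=(0,Au)$. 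Both images lie in the nilradical, so they belong to $\Delta$ if and only if $\xi\in l_{\Delta}$ and $Au\in l_{\Delta}$ respectively; and $Au\in l_{\Delta}$ is equivalent to $Al_{\Delta}\subset l_{\Delta}$ because $u$ spans $l_{\Delta}$. Therefore $\DC\Delta\subset\Delta$ holds iff ($\xi\in l_{\Delta}$ and $Al_{\Delta}\subset l_{\Delta}$), and negating this equivalence yields the stated characterization of the LARC.

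The computations are routine; the only step requiring attention is the structural observation that every bracket $[\Delta,\Delta]$ and every image $\DC(\text{basis vector})$ automatically lands inside the nilradical $\{0\}\times\R^2$, which is what turns the questions ``is it in $\Delta$?'' into the single one-dimensional test ``is it in $l_{\Delta}$?''. I expect no genuine obstacle beyond keeping the basis bookkeeping consistent with the definition of $l_{\Delta}$, and in particular beyond making the standing nondegeneracy assumption on the projection of $\Delta$ explicit so that $l_{\Delta}$ is well defined.
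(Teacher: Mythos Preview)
Your proposal is correct and follows essentially the same approach as the paper: both reduce Part~1 to computing the single nontrivial bracket of a basis of $\Delta$ and checking whether the result (which automatically lies in the nilradical) falls into $l_{\Delta}$, and both reduce Part~2 to applying $\DC$ to the basis $\{(1,0),(0,u)\}$. The only cosmetic difference is that in Part~1 you start directly with an adapted basis $\{(1,w),(0,u)\}$, whereas the paper begins with a general basis $\{(\sigma_1,u_1),(\sigma_2,u_2)\}$ and first identifies $l_{\Delta}=\R(\sigma_1u_2-\sigma_2u_1)$ before computing the bracket; your shortcut is harmless and yields the same computation.
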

		
		\begin{proof}
			1. Let $\{(\sigma_1, u_1), (\sigma_2, u_2)\}$ be a basis of $\Delta$. Then,
			$$\Delta\ni-\sigma_2(\sigma_1, u_1)+\sigma_1(\sigma_2, u_2)=(0, -\sigma_2u_1+\sigma_1 u_2)\in\{0\}\times\R^2,$$
			is a nonzero vector, implying that $l_{\Delta}=\R (\sigma_1 u_2-\sigma_2u_1)$.
			On the other hand,  		 
			$$\Delta\;\;\mbox{ is a subalgebra }\;\;\iff\;\; [(\sigma_1, u_1), (\sigma_2, u_2)]\in\Delta.$$
			However, by definition
			$$[(\sigma_1, u_1), (\sigma_2, u_2)]=(0, \sigma_1\theta u_2-\sigma_2\theta u_1)=(0, \theta(\sigma_1 u_2-\sigma_2u_1)),$$
			implying that 
			$$\Delta\;\;\mbox{ is a subalgebra }\;\;\iff\;\; l_{\Delta}\;\;\mbox{ is an eigenspace of }\theta.$$
			
			2. If Let $\{(\sigma_1, u_1), (\sigma_2, u_2)\}$ be an orthonormal basis of $\Delta$. If $\Delta$ is not a subalgebra, then necessarily $[(\sigma_1, u_1), (\sigma_2, u_2)]\notin\Delta$ and the LARC is satisfied. On the other hand, if $\Delta$ is a subalgebra, then by the previous lemma, $l_{\Delta}$ is an eigenspace of $\theta$. Since by hypothesis $(1, 0)$ also belongs to $\Delta$ we have that $\Delta=\R(1, 0)\oplus l_{\Delta}$. Therefore, if $\DC$ is the derivation associated with $\XC$, the LARC is satisfied if and only
			$$\DC(\{0\}\times l_{\Delta})\not\subset\Delta\;\;\;\mbox{ or }\;\;\;\DC(1, 0)\notin\Delta.$$
			However, the fact that 		
			$$\DC=\left(\begin{array}{cc}
			0 & 0 \\ \xi & A
			\end{array}\right)\;\;\mbox{ gives us that }\;\;\DC(\{0\}\times l_{\Delta})=\{0\}\times Al_{\Delta}\;\;\mbox{ and }\;\;\DC(1 ,0)=(0, \xi),$$
			showing that $\DC\Delta\not\subset\Delta$ if and only if $Al_{\Delta}\not\subset l_{\Delta}$ or $\xi\notin l_{\Delta}$, concluding the proof.
		\end{proof}

		\begin{remark}
			It is important to notice that if $\theta\neq\id_{\R^2}$, the fact that $A\theta=\theta A$ gives us that
			$$\theta l_{\Delta}\subset l_{\Delta}\;\;\;\implies \;\;\; Al_{\Delta}\subset l_{\Delta}.$$
			Consequently, in this case, if $(1, 0)\in \Delta$ the LARC holds if and only if $\xi\notin l_{\Delta}$.
		\end{remark}

			\section{The singular locus}

			In this section we analyze the singular locus of the ARS's on $G(\theta)$. As showed in \cite[Theorem 1]{PJAyGZ1}, the singular locus of a simple ARS $\Sigma=\{\XC, \Delta^L\}$ such that $\Delta$ is a subalgebra, is a submanifold. In this section we show that for the class of groups in question the same holds independent of any condition on the distribution. Such property is not true, for instance, on the Heisenberg group as showed in \cite[Section 3.5.3]{PJAyGZ1}.

		   	Let $\Sigma=\{\XC=(\xi, A), \Delta^L\}$ be a simple ARS on $G(\theta)$ wich associated singular locus 
		  	$$\ZC:=\{(t, v)\in G(\theta); \;\XC(t, v)\in \Delta(t, v)\}.$$
		  	Using the expression for $\XC$ gives us that 
		  	$$\XC(t, v)=(0, Av+\Lambda^{\theta}_t\xi)\in\Delta^L(t, v)\;\;\iff\;\; (0, Av+\Lambda_t\xi)\in\Delta^L(t, v)\cap(\{0\}\times\R^2)=\{0\}\times \rho_t (l_{\Delta}).$$
		  	Therefore, if ${\bf u}$ is a vector normal to $l_{\Delta}$, it holds that 
		  	$$\ZC=\{(t, v)\in G(\theta); \;\;\;\langle\rho_{-t}(Av+\Lambda_t^{\theta}\xi), {\bf u}\rangle_{\R^2}=0\}.$$
		  	Defining 
		  	\begin{equation}
		  	\label{function}
		  	F_{\bf u}:G(\theta)\rightarrow\R, \hspace{2cm} F_{\bf u}(t, v)=\langle\rho_{-t}(Av+\Lambda_t^{\theta}\xi), {\bf u}\rangle_{\R^2},
		  	\end{equation}
		  	gives us that $\ZC=F^{-1}(0)$. Moreover, a simple calculation shows that
		  	\begin{equation}
		  	\label{eq}
		  \partial_1F_{\bf u}(t, v)=\langle\rho_{-t}\left(\xi-\theta Av\right), {\bf u}\rangle_{\R^2}\hspace{2cm}\mbox{ and }\hspace{2cm}\partial_2F_{\bf u}(t, v)w=\langle\rho_{-t}Aw, {\bf u}\rangle_{\R^2}.
		  	\end{equation}
		  	
		  	\begin{theorem}
		  		The singular locus of any ARS on a $G(\theta)$ is an embedded submanifold.
		  	\end{theorem}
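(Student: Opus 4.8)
The plan is to exhibit $\mathcal Z$ as a regular level set of the single scalar function $F_{\mathbf u}$ from (\ref{function}), so that the preimage theorem applies, and to fall back on real-analyticity in the one degenerate situation where regularity fails. Since $\mathcal Z=F_{\mathbf u}^{-1}(0)$ and $F_{\mathbf u}$ is $\R$-valued, it suffices to prove $dF_{\mathbf u}(t,v)\neq 0$ at every point of $\mathcal Z$; then $\mathcal Z$ is an embedded hypersurface. Up to the isometries of Proposition \ref{difeo} I would normalize via Remark \ref{simplification} so that $(1,0)\in\Delta$, and then split the argument according to $\operatorname{rank}A$, reading off both partials from (\ref{eq}).

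From (\ref{eq}), the functional $\partial_2 F_{\mathbf u}(t,v)$ vanishes precisely when $\rho_{-t}(\operatorname{im}A)\perp\mathbf u$, that is, when $\operatorname{im}A\subseteq\rho_t\, l_\Delta$ (recall $\mathbf u^\perp=l_\Delta$). If $\det A\neq 0$ this is impossible, since $\operatorname{im}A=\R^2$ is not contained in a line, so $\partial_2 F_{\mathbf u}$ never vanishes and $0$ is automatically a regular value. If $\operatorname{rank}A=1$ I would use the commutation $A\theta=\theta A$: it forces $\operatorname{im}A$ to be a $\theta$-invariant line, hence a real eigenspace of $\theta$. When $l_\Delta$ is \emph{not} $\theta$-invariant, $\rho_t\, l_\Delta$ can never coincide with the fixed eigenspace $\operatorname{im}A$ (otherwise $l_\Delta=\rho_{-t}\operatorname{im}A=\operatorname{im}A$ would be invariant), so again $\partial_2 F_{\mathbf u}\neq 0$ everywhere; the same holds when $l_\Delta$ is $\theta$-invariant but distinct from $\operatorname{im}A$. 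The one remaining configuration is $l_\Delta$ $\theta$-invariant with $\operatorname{im}A=l_\Delta$: there $A\, l_\Delta\subseteq l_\Delta$, so the LARC (Proposition \ref{LARC}, using $(1,0)\in\Delta$) forces $\xi\notin l_\Delta$, and since $\theta A v\in\theta\, l_\Delta\subseteq l_\Delta$ we get $\xi-\theta A v\notin l_\Delta$, whence $\partial_1 F_{\mathbf u}(t,v)\neq 0$ everywhere. Thus for $A\neq 0$ the value $0$ is regular and $\mathcal Z$ is an embedded submanifold.

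The qualitatively different case is $A=0$, where $dF_{\mathbf u}$ may genuinely vanish along $\mathcal Z$ and the regular-value scheme breaks down. Here $\xi\neq 0$, for otherwise $\mathcal X\equiv 0$, contradicting Definition \ref{ARS}(i); and $F_{\mathbf u}(t,v)=\langle\rho_{-t}\Lambda_t^\theta\xi,\mathbf u\rangle$ depends only on $t$, defining a real-analytic function $f$ of one variable. This $f$ is not identically zero (otherwise $\mathcal X\in\Delta^L$ everywhere, again contradicting Definition \ref{ARS}(i)), so by analyticity its zero set is discrete. Consequently $\mathcal Z=f^{-1}(0)\times\R^2$ is a disjoint union of affine planes $\{t_0\}\times\R^2$, an embedded submanifold regardless of whether $dF_{\mathbf u}$ vanishes on it.

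The main obstacle is the bookkeeping of the degenerate case $\det A=0$: one has to use $A\theta=\theta A$ to identify $\operatorname{im}A$ as an eigenspace of $\theta$ and then invoke the LARC to rule out the simultaneous vanishing of the two partials in (\ref{eq}) on $\mathcal Z$, while the sub-case $A=0$ escapes the regular-value argument entirely and must be handled through analyticity of $f$. By contrast, the case $\det A\neq 0$ is immediate.
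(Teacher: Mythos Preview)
Your argument is correct and, for $A\neq 0$, follows essentially the same route as the paper: normalize to $(1,0)\in\Delta$ via Proposition~\ref{difeo}/Remark~\ref{simplification}, then show that $0$ is a regular value of $F_{\mathbf u}$ by deriving from the simultaneous vanishing of both partials in~(\ref{eq}) that $\operatorname{Im}A=l_\Delta$ is $\theta$-invariant and $\xi\in l_\Delta$, contradicting the LARC (Proposition~\ref{LARC}). Your finer case split on $\operatorname{rank}A$ and on the position of $l_\Delta$ relative to $\operatorname{Im}A$ is just a repackaging of the same contradiction; the paper argues it in one sweep without separating $\det A\neq 0$ from $\operatorname{rank}A=1$.

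The genuine difference is in the case $A=0$. The paper computes $\langle\rho_{-t}\Lambda_t^\theta\xi,\mathbf u\rangle$ explicitly, distinguishing $\det\theta=0$ from $\det\theta\neq 0$, and invokes the appendix Lemma~\ref{Lambda} on asymptotics of $2\times 2$ matrices to bound the number of zeros case by case. Your analyticity argument bypasses all of this: $f(t)=\langle\rho_{-t}\Lambda_t^\theta\xi,\mathbf u\rangle$ is real-analytic, and condition~(i) of Definition~\ref{ARS} forces $f\not\equiv 0$, so its zero set is closed and discrete, whence $\mathcal Z=f^{-1}(0)\times\R^2$ is a disjoint union of parallel planes. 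This is shorter and avoids the dependence on Lemma~\ref{Lambda}; the paper's explicit computation, on the other hand, yields quantitative information (finitely many vs.\ infinitely many planes, depending on the eigenvalue type of $\theta$) that your argument does not produce.
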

		  
		  \begin{proof}
		  	Let us consider an ARS $\Sigma=\{\XC, \Delta^L\}$. Since the image of a submanifolds by a diffeomorphism is also a submanifold, Proposition \ref{difeo} allows us to assume w.l.o.g. that $(1, 0)\in \Delta$. 
		  	
		  	Assume first that $A\equiv 0$. By definition, in this case, the singular locus is given by 
		  	$$\ZC=X\times \R^2, \;\;\mbox{ where }\;\; X=\{t\in \R;\;\;\; \langle\rho_{-t}(\Lambda_t^{\theta}\xi), {\bf u}\rangle_{\R^2}=0\}.$$
		  	However, 
		  	$$\langle\rho_{-t}(\Lambda_t^{\theta}\xi), {\bf u}\rangle_{\R^2}=-\langle\Lambda_{-t}^{\theta}\xi, {\bf u}\rangle_{\R^2}=\left\{\begin{array}{c}
		  	\xi_1u_1(1-\rme^{-t})+\xi_2u_2t, \;\;\mbox{ if }\;\;\det\theta = 0\\
		  	-\langle\rho_{-t}\theta^{-1}\xi, {\bf u}\rangle_{\R^2}+\langle\theta^{-1}\xi, {\bf u}\rangle_{\R^2}, \;\;\mbox{ if }\;\;\det\theta\neq 0
		  	\end{array}\right.,$$
		  	where $\xi=(\xi_1, \xi_2)$ and ${\bf u}=(u_1, u_2)$ in the canonical basis.
		  	
		  	By the LARC we get that $\xi\notin l_{\Delta}$ and hence, differentiation of the first equation shows that $t\mapsto \xi_1u_1(1-\rme^{-t})+\xi_2u_2t$ has at most one critical point implying by Rolle's Theorem that $X$ has at most two points if $\det\theta=0$. Now, if $\det\theta\neq 0$, we get by Lemma \ref{Lambda}, that $X$ is an enumerable subset of $\R$ with cardinality depending on the eigenvalues of $\theta$. In any case, the singular locus of $\Sigma$ is an enumerable union of planes on $G(\theta)$, and hence a submanifold as stated.

		  	Let us now consider the case where $A\not\equiv 0$. In order to show that the singular locus $\ZC$ of $\Sigma$ is a submanifold, it is enough to assure that $0\in\R$ is a regular value of the map $F_{\bf u}$ defined previously. 
		  	
		  	By equation (\ref{eq}), $0\in\R$ is not a regular value of $F$ if and only if there exists $(t, v)\in\ZC$ such that 
		  	$$\partial_1F_{\bf u}(t, v)=\langle\rho_{-t}\left(\xi-\theta Av\right), {\bf u}\rangle_{\R^2}=0\hspace{1cm}\mbox{ and }\hspace{1cm}\forall w\in\R^2\;\;\;\partial_2F_{\bf u}(t, v)w=\langle\rho_{-t}Aw, {\bf u}\rangle_{\R^2}=0.$$
		  	From the previous, it holds that 
		  	$$\forall w\in\R^2, \;\;\;\partial_2F_{\bf u}(t, v)w=\langle\rho_{-t}Aw, {\bf u}\rangle_{\R^2}=0\;\;\implies\;\;\mathrm{Im}A\subset l_{\Delta},$$
		  	and 
		  	$$0=\langle\rho_{-t}\left(\xi-\theta Av\right), {\bf u}\rangle_{\R^2}=\langle\rho_{-t}\xi, {\bf u}\rangle_{\R^2}-\underbrace{\langle \rho_{-t}A\theta v, {\bf u}\rangle_{\R^2}}_{=0}=\langle\rho_{-t}\xi, {\bf u}\rangle_{\R^2},$$
		  	where we used that $A\theta=\theta A$. Since $A\not\equiv 0$ we have that $\mathrm{Im}A=l_{\Delta}$ implying, in particular, that $Al_{\Delta}\subset l_{\Delta}$ and $\Delta$ is a subalgebra.

		  	On the other hand,   
		  	$$\langle\rho_{-t}\xi, {\bf u}\rangle_{\R^2}=0\;\;\implies\;\;\rho_{-t}\xi\in l_{\Delta}\;\;\implies\;\;\xi\in\rho_{t}(l_{\Delta})=l_{\Delta},$$
		  	which by Proposition \ref{LARC} implies that $\Sigma$ does not satisfies the LARC. Therefore, $0$ is a regular value of the map $F_{\bf u}$ showing that $\ZC$ is in fact an embedded submanifold.		    	
		  \end{proof}
		  
		  \bigskip
		  
		  The next result studies the connectedness of the singular locus.
		  
		  \begin{theorem}
		  	\label{connectedcomponents}
		  	If $\Sigma=\{\XC=(\xi, A), \Delta^L\}$ is an ARS and $A\not\equiv 0$, then $\ZC$ is connected. Moreover, $G(\theta)\setminus\ZC$ has two connected components given by 
		  	$$\CC^-:=F_{\bf u}^{-1}(-\infty, 0)\;\;\mbox{ and }\;\;\CC^+:=F_{\bf u}^{-1}(0, +\infty),$$
		  	where $F_{\bf u}$ is the function define in (\ref{function}).
		  \end{theorem}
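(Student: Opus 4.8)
The plan is to exploit that, for each fixed $t$, the function $v\mapsto F_{\bf u}(t,v)$ is affine in $v$, so its sign behaviour is entirely governed by its ($v$-independent) $v$-gradient. Writing $F_{\bf u}(t,v)=\langle n(t),v\rangle_{\R^2}+c(t)$ with $c(t):=F_{\bf u}(t,0)=\langle\rho_{-t}\Lambda_t^{\theta}\xi,{\bf u}\rangle_{\R^2}$, equation (\ref{eq}) identifies the gradient as the vector $n(t)$ characterized by $\langle n(t),w\rangle_{\R^2}=\langle\rho_{-t}Aw,{\bf u}\rangle_{\R^2}$, i.e. $n(t)=A^{\top}\rho_{-t}^{\top}{\bf u}$. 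The first, and in my view decisive, step is to use $A\theta=\theta A$ (hence $A\rho_{-t}=\rho_{-t}A$) to rewrite $n(t)=\rho_{-t}^{\top}(A^{\top}{\bf u})$. Since $\rho_{-t}^{\top}$ is invertible for every $t$, this produces a clean dichotomy: either $A^{\top}{\bf u}\neq0$ and $n(t)\neq0$ for all $t$, or $A^{\top}{\bf u}=0$ and $n(t)\equiv 0$. I expect this dichotomy to be the main obstacle, precisely because it rules out the awkward possibility that $n$ vanish at isolated times, which would make the $t$-slices of $\ZC$ jump between lines, the full plane and the empty set.

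\emph{Nondegenerate case $A^{\top}{\bf u}\neq0$.} Here every slice $F_{\bf u}(t,\cdot)$ is a nonconstant affine function. I would choose the smooth frame $e(t):=n(t)/\|n(t)\|$ and $e^{\perp}(t):=Je(t)$, with $J$ a fixed right-angle rotation of $\R^2$, together with the particular solution $v_0(t):=-c(t)\,n(t)/\|n(t)\|^2$, and consider the diffeomorphism $\Psi:\R^3\to G(\theta)$, $\Psi(t,\alpha,\beta)=(t,\,v_0(t)+\alpha e(t)+\beta e^{\perp}(t))$. A short computation gives $F_{\bf u}(\Psi(t,\alpha,\beta))=\alpha\,\|n(t)\|$, so $\Psi$ carries $\{\alpha=0\}$, $\{\alpha>0\}$ and $\{\alpha<0\}$ bijectively onto $\ZC$, $\CC^{+}$ and $\CC^{-}$. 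As these three sets are connected and $\Psi$ is a homeomorphism, $\ZC$ is connected, while $\CC^{\pm}$ are open, nonempty, connected and partition $G(\theta)\setminus\ZC$; hence they are its connected components.

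\emph{Degenerate case $A^{\top}{\bf u}=0$.} Now $F_{\bf u}(t,v)=c(t)$ depends only on $t$. Since $A\neq0$ and $\mathrm{Im}\,A\subset l_{\Delta}$, necessarily $\mathrm{rank}\,A=1$, $\mathrm{Im}\,A=l_{\Delta}$ and $Al_{\Delta}\subset l_{\Delta}$; moreover $A\theta=\theta A$ makes $l_{\Delta}$ invariant under $\theta$, so $\Delta$ is a subalgebra by Proposition \ref{LARC}(1). Reducing to $(1,0)\in\Delta$ via Proposition \ref{difeo} as in the previous theorem, Proposition \ref{LARC}(2) then forces $\xi\notin l_{\Delta}$. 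Because $\langle\rho_{-t}\xi,{\bf u}\rangle_{\R^2}=0\iff\xi\in\rho_t(l_{\Delta})=l_{\Delta}$, the derivative $c'(t)=\langle\rho_{-t}\xi,{\bf u}\rangle_{\R^2}$ never vanishes, so $c$ is strictly monotone. As $c(0)=0$ (using $\Lambda_0^{\theta}=0$), I conclude that $\ZC=\{0\}\times\R^2$ is a connected plane, and that $\CC^{-},\CC^{+}$ are exactly the two half-spaces where $c<0$ and $c>0$ respectively, i.e. two nonempty connected components.

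These two cases exhaust the hypothesis $A\neq0$ and deliver both assertions of the theorem. The only genuinely delicate points are the vanishing dichotomy for $n(t)$ — which hinges entirely on the commutation $A\theta=\theta A$ — and, in the degenerate case, the combined use of the LARC and the normalization $c(0)=0$ to guarantee that $\ZC$ is a single nonempty separating plane rather than the empty set.
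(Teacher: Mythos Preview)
Your proof is correct and takes a genuinely different route from the paper. The paper splits according to $\det A\neq0$ versus $\mathrm{rank}\,A=1$, building in each case an explicit homeomorphism ($H$ in the invertible case, $I$ in the rank-one case) that straightens $\ZC$ onto a coordinate plane. You instead split according to whether $A^{\top}{\bf u}$ vanishes. Your key observation --- that $A\theta=\theta A$ forces $n(t)=A^{\top}\rho_{-t}^{\top}{\bf u}=\rho_{-t}^{\top}(A^{\top}{\bf u})$, so the slice gradient is either nowhere zero or identically zero --- is exactly the dichotomy the problem demands, and it lets a single diffeomorphism $\Psi$ handle simultaneously the paper's invertible case and the generic part of its rank-one case. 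This is cleaner: the paper's rank-one argument spends most of its effort showing $\langle Aw_1,{\bf u}\rangle\neq0$, which in your language is precisely the assertion $A^{\top}{\bf u}\neq0$. Your degenerate case ($\mathrm{Im}\,A=l_{\Delta}$) is then a residual situation the paper tries to exclude by contradiction; you instead show directly that here $F_{\bf u}$ depends only on $t$ and, via the LARC, that $c(t)$ is strictly monotone with $c(0)=0$, giving $\ZC=\{0\}\times\R^2$.

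One small simplification: in your degenerate case the reduction to $(1,0)\in\Delta$ is unnecessary. Since $\Delta$ is a subalgebra with $Al_{\Delta}\subset l_{\Delta}$, and any $(\sigma,u)\in\Delta$ with $\sigma\neq0$ has $\DC(\sigma,u)=(0,\sigma\xi+Au)$ with $Au\in l_{\Delta}$, the LARC is equivalent to $\xi\notin l_{\Delta}$ for the \emph{original} data. Arguing without the reduction avoids having to transport the identification $\CC^{\pm}=F_{\bf u}^{-1}(\pm)$ back through the automorphism $\psi$.
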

		  
		  \begin{proof}
		  	Assume first that $\det A\neq 0$ and consider
		  	$$H:G(\theta)\rightarrow G(\theta), \;\;\; H(t, v)=(t, A^{-1}(\rho_tv-\Lambda^{\theta}_t\xi)).$$
		  	The map $H$ is a homeomorphism with inverse given by 
		  	$$H^{-1}(t, v)=(t, \rho_{-t}(Av+\Lambda^{\theta}_t\xi)).$$
		  	Moreover, 
		  	$$(t, v)\in \ZC\;\iff\; \langle \rho_{-t}(Av+\Lambda^{\theta}_t\xi), {\bf u}\rangle_{\R^2}=0\;\iff \exists s\in\R; \;\rho_{-t}(Av+\Lambda^{\theta}_t\xi)=su$$
		  	$$\iff v=A^{-1}(s\rho_{t}u-\Lambda_t\xi)\;\;\iff (t, v)=H(t, su),$$
		  	showing that $\ZC$ is the homeomorphic image of the plane $\R\times l_{\Delta}$. In particular, $\ZC$ is connected and $G(\theta)\setminus\ZC$ has two connected components. 
		  	Also, 
		  	$$F_{\bf u}(H(t, v))=F_{\bf u}\left(t, A^{-1}(\rho_tv-\Lambda^{\theta}_t\xi)\right)=\langle\rho_{-t}\left(A\left(A^{-1}(\rho_tv-\Lambda^{\theta}_t\xi)\right)+\Lambda^{\theta}_t\xi\right), {\bf u}\rangle_{\R^2}=\langle v, {\bf u}\rangle_{\R^2},$$
		  	implying that $F_{\bf u}^{-1}(-\infty, 0)$ and $F_{\bf u}^{-1}(0, +\infty)$ are (pathwise) connected. Since 
		  	$$F_{\bf u}^{-1}(\R\setminus\{0\})=G(\theta)\setminus\ZC,$$
		  	we get that $\CC^-$ and $\CC^+$ are in fact the connected components of $G(\theta)\setminus\ZC$.
		  	\bigskip
		  	
		  	Let us now consider the case where $\dim \mathrm{Im}A=1$. By Proposition \ref{difeo} (see also Remark \ref{simplification}) we can assume that $(1, 0)\in\Delta$ . 
		  	Since $\ker A$ has also dimension one and $A\theta=\theta A$, we can easily construct an orthonormal basis $\{w_1, w_2\}$ of $\R^2$ such that 
		  	$$Aw_1\neq 0, \;\;Aw_2=0\;\;\;\mbox{ and }\;\;\;\theta Aw_1=\beta Aw_1.$$
		  	Let $(t, v)\in\ZC$ and write $v=\langle v, w_1\rangle_{\R^2}w_1+\langle v, w_2\rangle_{\R^2}w_2$. Then,
		  	$$0=\langle \rho_{-t}(Av+\Lambda_t^{\theta}\xi), {\bf u}\rangle_{\R^2}=\langle v, w_1\rangle_{\R^2}\langle \rho_{-t}Aw_1, {\bf u}\rangle_{\R^2}+\langle\rho_{-t}\Lambda_t^{\theta}\xi, {\bf u}\rangle_{\R^2}\;\;\implies\;\; $$
		  	$$\langle v, w_1\rangle_{\R^2}\langle \rho_{-t}Aw_1, {\bf u}\rangle_{\R^2}=\langle\Lambda_{-t}^{\theta}\xi, {\bf u}\rangle_{\R^2}.$$
		  	Since $Aw_1$ is a nonzero eigenvector of $\theta$ we have that  
		  	$$\langle \rho_{-t}Aw_1, {\bf u}\rangle_{\R^2}=\rme^{-\beta t}\langle Aw_1, {\bf u}\rangle_{\R^2},$$
		  	where $\beta$ is an eigenvalue of $\theta$. In particular, if $\langle Aw_1, {\bf u}\rangle_{\R^2}=0$ we get by orthogonality that $Aw_1\in l_{\Delta}$, implying that $l_{\Delta}=\mathrm{Im}A$ and hence $Al_{\Delta}\subset l_{\Delta}$. Also, in this case,  
		  		$$\forall t\in\R, \;\;\; 0=\rme^{-\beta t}\langle Aw_1, {\bf u}\rangle_{\R^2}=\langle \rho_{-t}Aw_1, {\bf u}\rangle_{\R^2}=\langle\Lambda_{-t}^{\theta}\xi, {\bf u}\rangle_{\R^2}\;\;\implies\;\;\forall t\in\R, \;\;\Lambda_{-t}^{\theta}\xi\in l_{\Delta},$$
		  		and since $\det \Lambda_{-t}^{\theta}\neq 0$ for some $t\in\R$ we get by the properties of the operator $\Lambda^{\theta}$ that $\xi\in l_{\Delta}$. Therefore, 
		  		$$\langle Aw_1, {\bf u}\rangle_{\R^2}=0\;\;\implies\;\; Al_{\Delta}\subset l_{\Delta}\;\mbox{ and }\;\xi\in l_{\Delta},$$
		  		which together with the assumption  $(1, 0)\in\Delta$ contradicts the LARC (see Proposition \ref{LARC}). Therefore, $\langle Aw_1, {\bf u}\rangle_{\R^2}\neq 0$ and consequently 
		  		$$\langle v, w_1\rangle_{\R^2}=\frac{\langle \Lambda_{-t}^{\theta}\xi, {\bf u}\rangle_{\R^2}}{\langle \rho_{-t}Aw_1, {\bf u}\rangle_{\R^2}}.$$
		  		By the previous, the map
		  		$$I:G(\theta)\rightarrow G(\theta), \;\;(t, v)\mapsto I(t, v)=\left(t, \left(\langle v, w_1\rangle_{\R^2}+\frac{\langle \Lambda_{-t}^{\theta}\xi, {\bf u}\rangle_{\R^2}}{\langle \rho_{-t}Aw_1, {\bf u}\rangle_{\R^2}}\right)w_1+ \langle v, w_2\rangle_{\R^2}w_2\right),$$
		  		is well defined, continuous and a simple calculation shows that its inverse in given by 
		  		$$I^{-1}(t, v)=\left(t, \left(\langle v, w_1\rangle_{\R^2}-\frac{\langle \Lambda_{-t}^{\theta}\xi, {\bf u}\rangle_{\R^2}}{\langle \rho_{-t}Aw_1, {\bf u}\rangle_{\R^2}}\right)w_1+ \langle v, w_2\rangle_{\R^2}w_2\right).$$
		  		Moreover, by the previous calculations, $I(\R\times\R w_2)=\ZC$, showing that $\ZC$ is connected. Also, 
		  		$$F_{\bf u}(I(t, v))=\left\langle \rho_{-t}A\left(\left(\langle v, w_1\rangle_{\R^2}+\frac{\langle \Lambda_{-t}^{\theta}\xi, {\bf u}\rangle_{\R^2}}{\langle\rho_{-t} Aw_1, {\bf u}\rangle_{\R^2}}\right)w_1+ \langle v, w_2\rangle_{\R^2}w_2\right)+\rho_{-t}\Lambda_t^{\theta}\xi, {\bf u}\right\rangle_{\R^2}$$
		  		$$=\left(\langle v, w_1\rangle_{\R^2}+\frac{\langle\Lambda_{-t}^{\theta}\xi, {\bf u}\rangle_{\R^2}}{\langle\rho_{-t} Aw_1, {\bf u}\rangle_{\R^2}}\right)\langle \rho_{-t}Aw_1, {\bf u}\rangle_{\R^2}+\langle\rho_{-t}\Lambda_t^{\theta}\xi, {\bf u}\rangle_{\R^2}$$
		  		$$=\langle v, w_1\rangle_{\R^2}\langle \rho_{-t}Aw_1, {\bf u}\rangle_{\R^2}+\langle\Lambda_{-t}^{\theta}\xi, {\bf u}\rangle_{\R^2}-\langle \Lambda_{-t}^{\theta}\xi, {\bf u}\rangle_{\R^2}$$
		  		$$=\rme^{-\beta t}\langle v, w_1\rangle_{\R^2}\langle Aw_1, {\bf u}\rangle_{\R^2},$$
		  		which as previously implies that $\CC^-$ and $\CC^+$ are the connected components of $G(\theta)\setminus\ZC$.
		  \end{proof}

	The next example shows that if the matrix $A$ associated with a linear vector field $\XC=(\xi, A)$ is trivial, the singular locus is not necessarily connected.
	
	\begin{example}
		\label{example1}
		Consider the 3D Lie group $G(\theta)$ and the simple ARS $\Sigma=\{\XC, \Delta^L\}$, where
		$$\theta=\begin{pmatrix}
			0 & -1   \\
			1 & 0   
		\end{pmatrix},\;\;\;\;\;\XC=(\xi, 0)\;\;\;\;\mbox{ and }\;\;\;\;\alpha=\{(1,0),(0,e_{1})\},$$
	    is an orthonormal basis of $\Delta$. Since $[(1,0),(0,e_1)]=(0,e_{2})$, the subspace $\Delta$ is not a subalgebra and consequently $\Sigma$ satisfies the LARC. 
	  Moreover, the fact that       	
		$$\Lambda_t^{\theta}=(\rho_t-\operatorname{id}_{\R^{2}})\theta^{-1}=\begin{pmatrix}
			\sin t & \cos t-1   \\
			1-\cos t & \sin t   
		\end{pmatrix},$$
       gives us that, in coordinates,
	   $$\XC(t,(x, y))=(0,a \sin t +b(\cos t -1),a(1-\cos t)+b\sin t), \;\;\;\mbox{ where }\;\;\;\xi=(a, b).$$
	   Therefore, we obtain 
	 	$$
			\ZC=\{(t, (x, y)); \;\; b\sin t+a\cos t =a\}.
		$$
	    We claim that $\ZC$ has an infinite number of connected components. In fact, if $\gamma$ is the angle between the vectors $\xi$ and $e_1$ we can write 
		$$\cos \gamma=\dfrac{a}{a^2+b^2}\;\;\;\mbox{ and }\;\;\;\sin \gamma=\dfrac{b}{a^2+b^2},$$
		and hence,  
		$$b\sin t+a\cos t =a\;\;\;\iff\;\;\;\cos\gamma=\cos(t-\gamma).$$
		As a consequence, the singular locus $\ZC$ is given by 
		\begin{equation}
			\label{loci1}
		\ZC=X\times\R^2, \;\;\;\mbox{ where }\;\;\; \Gamma=\{2(\gamma-\pi k)\;k \in \Z \}\cup \{2\pi k\;k \in \Z \},
	\end{equation}
		showing it has an infinite enumerable union of parallel planes (see Figure \ref{Locus1}).

	\end{example}

	\begin{example}
			\label{example2}
		Consider now $G(\theta)$ and the simple ARS $\Sigma=\{\XC, \Delta^L\}$, where
		$$\theta=\begin{pmatrix}
			0 & -1   \\
			1 & 0   
		\end{pmatrix},\;\;\;\;\;\XC=\left((1, 3), \left(\begin{array}{cc}
		2 & 1 \\  0 & 2
	\end{array}\right)\right)\;\;\;\;\mbox{ and }\;\;\;\;\alpha=\{(1,0),(0,e_{1})\},$$
		is an orthonormal basis of $\Delta$. In this case, $[(1,0),(0,e_1)]=(0,e_1)$, implying that $\Delta$ is a subalgebra. However, $(1, 3)\notin\Delta$ implying that the LARC is satisfied. By straightforward calculations, we get that, in coordinates, 
		$$\XC(t, (x, y))=(0,2x+y+3t\rme^{t}-2\rme^t+2, 2y+3(\rme^{t}-1)),$$
		and hence,
		$$\ZC=\{(t, (x, y)); \;\; 2y=3(1-\rme^t)\}.$$
	\end{example}
\begin{figure}[h]
	\centering
	\begin{subfigure}{.5\textwidth}
		\centering
		\includegraphics[width=1\linewidth]{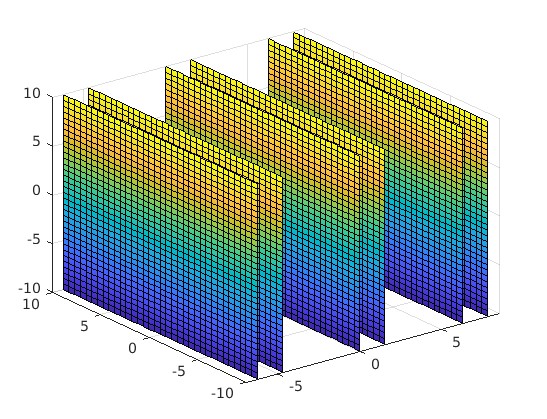}
		\caption{The singular locus of Example \ref{example1}}
		\label{Locus1}
	\end{subfigure}%
	\begin{subfigure}{.5\textwidth}
		\centering
		\includegraphics[width=1\linewidth]{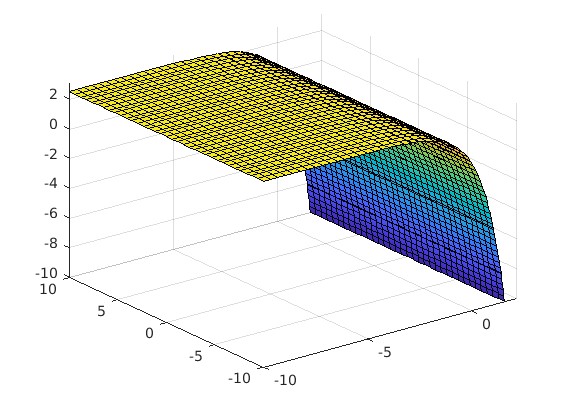}
		\caption{The singular locus of Example \ref{example2}}
		\label{Locus2}
	\end{subfigure}
\end{figure}

    \subsection{Crossing the singular locus}
		  
		  In this section we analyze how the exponential curves crosses the singular locus.
		  
		  Let us fix a simple ARS $\Sigma=\{\XC=(\xi, A), \Delta^L\}$ with $A\neq 0$ and consider, as previously, the function $F_{\bf u}$, where ${\bf u}$ is a normal vector to the line $l_{\Delta}$. Let $(t, v)\in G(\theta)$ and consider the exponential curve $s\in\R\mapsto (t, v)\exp s(a, w)$. By Theorem \ref{connectedcomponents} in order to see how such curve behaves with relation to the singular locus, it is enough to analyze the zeros of the function 
		  $$s\in\R\mapsto F_{\bf u}((t, v)\exp s(a, w)).$$
		  Using the formula for the exponential and for the operator $\Lambda^{\theta}$ is easy to see that 
		  \begin{equation}
		  \label{zeros}
		  F_{\bf u}((t, v)\exp s(a, w))= \left\{\begin{array}{ll}
		  F_{\bf u}(t, v)+s\langle Aw, {\bf u}\rangle_{\R^2} & \mbox{ if } a=0\\
		  F_{\rho^T_{-as} \bf u}(t, v)-\frac{1}{a}\langle \Lambda_{-as}(Aw+a\xi), {\bf u}\rangle_{\R^2} & \mbox{ if } a\neq 0\\
		  \end{array}\right.,
		  \end{equation}
		  where for simplicity we $F_{\rho^T_{-as} \bf u}(t, v)=\langle \rho_{-t}(Av+\Lambda_t\xi), \rho^T_{-as}{\bf u}\rangle_{\R^2}=\langle \rho_{-t-as}(Av+\Lambda_t\xi), {\bf u}\rangle_{\R^2}$.
		  
		  The following lemma states what happens with the previous function when $(t, v)\in\ZC$. 
		  
		  \begin{lemma}
		  	Let $(t, v)\in\ZC$ and consider the function 
		  	$$s\mapsto F_{\bf u}((t, v)\exp s(a, w)).$$
		  	Then, $F_{\bf u}((t, v)\exp s(a, w))\equiv 0$ or there exists $\delta>0$ such that $F_{\bf u}((t, v)\exp s(a, w))\neq 0$ for all $s\in(-\delta, \delta)\setminus\{0\}$.
		  \end{lemma}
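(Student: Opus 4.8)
The plan is to reduce the stated dichotomy to the elementary fact that a real-analytic function of one real variable is either identically zero or has only isolated zeros. Set $h(s):=F_{\bf u}((t, v)\exp s(a, w))$. Because $(t, v)\in\ZC$ we have $h(0)=F_{\bf u}(t, v)=0$, so $0$ is always a zero of $h$. If I can establish that $h$ is real-analytic on $\R$, then the identity theorem gives exactly the two alternatives: either $h\equiv 0$, which is the first case of the lemma, or the zero set of $h$ is discrete, so that $0$ is an isolated zero and there is $\delta>0$ with $h(s)\neq 0$ for all $s\in(-\delta, \delta)\setminus\{0\}$, which is the second case. Thus the whole matter comes down to verifying analyticity of $h$, and for this I would use the explicit formula (\ref{zeros}).

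In the case $a=0$ no analyticity is even needed. By (\ref{zeros}) together with $h(0)=0$ one has $h(s)=s\langle Aw, {\bf u}\rangle_{\R^2}$, which is either identically zero (precisely when $\langle Aw, {\bf u}\rangle_{\R^2}=0$) or a nonzero scalar multiple of $s$, hence nonvanishing for every $s\neq 0$. So the dichotomy holds trivially here.

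For $a\neq 0$, formula (\ref{zeros}) writes $h(s)$ in terms of the entries of $\rho_{-t-as}=\rme^{(-t-as)\theta}$ and of $\Lambda^{\theta}_{-as}$, paired against the fixed vectors $Av+\Lambda^{\theta}_t\xi$, $Aw+a\xi$ and ${\bf u}$. The entries of $s\mapsto\rme^{(-t-as)\theta}$ are real-analytic, being a matrix exponential precomposed with the affine map $s\mapsto -t-as$; and $s\mapsto\Lambda^{\theta}_{-as}$ is real-analytic as well, since by property $2$ of $\Lambda^{\theta}$ it satisfies a constant-coefficient linear ODE in $s$ (equivalently, when $\det\theta\neq 0$ one may invoke property $6$ directly, $\Lambda^{\theta}_{-as}=(\rme^{-as\theta}-\id_{\R^2})\theta^{-1}$). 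Since real-analytic functions are closed under products and linear combinations, $h$ is real-analytic on all of $\R$, and the conclusion follows as above.

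The only step requiring genuine care is therefore the analyticity of the matrix-valued maps $s\mapsto\rho_{-t-as}$ and $s\mapsto\Lambda^{\theta}_{-as}$, which is where the structure of $G(\theta)$ enters. If one prefers a fully explicit route that avoids the abstract identity theorem, I would note that for each of the three normal forms of $\theta$ the entries of these maps are quasi-polynomials, that is, finite combinations of terms $s^k\rme^{\lambda s}\cos(\mu s)$ and $s^k\rme^{\lambda s}\sin(\mu s)$; a nonzero quasi-polynomial has only isolated zeros, which delivers the same dichotomy directly from the expression (\ref{zeros}).
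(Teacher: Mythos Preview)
Your proof is correct and takes a genuinely different route from the paper. The paper argues by explicit case analysis: for $a=0$ it does exactly what you do, but for $a\neq 0$ it writes $\rho_{-t}(Av+\Lambda_t^{\theta}\xi)=\mu u$ with $u\in l_{\Delta}$, splits according to whether $Aw+a\xi-a\mu\theta u$ lies in $l_{\Delta}$, and in each subcase computes $\frac{d}{ds}F_{\bf u}((t,v)\exp s(a,w))$ directly, using the identity $\rho_{-as}-\theta\Lambda^{\theta}_{-as}=\id_{\R^2}$ to simplify. Your argument replaces all of this with a single structural observation: the matrix-valued maps $s\mapsto\rme^{(-t-as)\theta}$ and $s\mapsto\Lambda^{\theta}_{-as}$ are real-analytic (indeed entire, with explicit power series), hence so is $h$, and the identity theorem for real-analytic functions of one variable yields the dichotomy immediately. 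Your approach is shorter, avoids any case distinction on the geometry of $l_{\Delta}$, and does not use the standing hypothesis $A\neq 0$; it also makes transparent why the result would extend to any analytic Lie group. The paper's computation, on the other hand, produces as a byproduct an explicit characterization of when $h\equiv 0$ (namely $\tau=0$ or $\Delta$ a subalgebra, in the notation there) and of the sign behaviour of $h'$ near $0$, though this extra information is not actually invoked in the subsequent Theorem~\ref{curves}.
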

		  
		  \begin{proof}
		  	Let us first consider the case where $a=0$. By equation (\ref{zeros}), if $(t, v)\in\ZC$, 
		  	$$F_{\bf u}((t, v)\exp s(a, w))=\underbrace{F_{\bf u}(t, v)}_{=0}+s\langle Aw, {\bf u}\rangle_{\R^2}=s\langle Aw, {\bf  u}\rangle_{\R^2}.$$
		  	Therefore, if $Aw\in l_{\Delta}$ we get that $F_{\bf u}((t, v)\exp s(a, w))=0$ for all $s\in\R$ and if $Aw\notin l_{\Delta}$ we get that 
		  	$F_{\bf u}((t, v)\exp s(a, w)\neq 0$ for all $s\in \R\setminus\{0\}$.
		  	
		  	Assume now that $a\neq 0$ and fix $0\neq u\in l_{\Delta}$. By equation (\ref{zeros}) we have that 
		  	$$F_{\bf u}((t, v)\exp s(a, w))=F_{\rho^T_{-as} \bf u}(t, v)-\frac{1}{a}\langle \Lambda_{-as}(Aw+a\xi), {\bf u}\rangle_{\R^2}.$$
		    Since $(t, v)\in \ZC$, there exists $\mu=\mu(t, v)\in\R$ such that $\rho_{-t}(Av+\Lambda_t\xi)=\mu u$. In particular, 
		    $$F_{\rho^T_{-as} \bf u}(t, v)=\langle \rho_{-t}(Av+\Lambda_t\xi), \rho^T_{-as}{\bf u}\rangle_{\R^2}=\mu\langle u, \rho^T_{-as}{\bf u}\rangle_{\R^2}=\mu\langle\rho_{-as}u, {\bf u}\rangle_{\R^2},$$
		  	and so, 
		   $$F_{\bf u}((t, v)\exp s(a, w))=\mu\langle\rho_{-as}u, {\bf u}\rangle_{\R^2}-\frac{1}{a}\langle\Lambda_{-as}(Aw+a\xi), {\bf u}\rangle_{\R^2}.$$
		   We have the following cases:
		   \begin{enumerate}
		   	\item $Aw+a\xi-a\mu\theta u\in l_{\Delta}$. 
		   	
		   	In this case, there exists $\tau\in\R$ such that $Aw+a\xi-a\mu\theta u=\tau u$ and hence 
		   	$$\mu\rho_{-as}u-\frac{1}{a}\Lambda^{\theta}_{-as}(Aw+a\xi)=\mu(\rho_{-as}u-\Lambda^{\theta}_{-as}\theta u)-\frac{\tau}{a}\Lambda^{\theta}_{-as}u=\mu u-\frac{\tau}{a}\Lambda^{\theta}_{-as}u,$$
		   	where for the last equality we used property 4. of the operator $\Lambda^{\theta}$. Hence, 
		   	$$F_{\bf u}((t, v)\exp s(a, w))=\mu\langle\rho_{-as}u, {\bf u}\rangle_{\R^2}-\frac{1}{a}\langle\Lambda_{-as}(Aw+a\xi), {\bf u}\rangle_{\R^2}$$
		   	$$=\mu \langle u, {\bf u}\rangle_{\R^2}-\frac{\tau}{a}\langle\Lambda^{\theta}_{-as}u, {\bf u}\rangle_{\R^2}=-\frac{\tau}{a}\langle\Lambda^{\theta}_{-as}u, {\bf u}\rangle_{\R^2}.$$
		   	Therefore, if $\tau=0$ or $\Delta$ is a subalgebra, $-\frac{\tau}{a}\langle\Lambda^{\theta}_{-as}u, {\bf u}\rangle_{\R^2}=0$, implying that $F_{\bf u}((t, v)\exp s(a, w))\equiv 0$. Now, if $\tau\neq 0$ and $\Delta$ is not a subalgebra, we have that 
		   	$$\frac{d}{ds}F_{\bf u}((t, v)\exp s(a, w))=\tau\langle\rho_{-as}u, {\bf u}\rangle_{\R^2}.$$
		   	In particular, since $\Delta$ is not a subalgebra, $u$ is not an eigenvalue of $\theta$ and hence, there exists $\delta>0$ such that 
		   	$$\frac{d}{ds}F_{\bf u}((t, v)\exp s(a, w))=\tau\langle\rho_{-as}u, {\bf u}\rangle_{\R^2}\neq 0, \;\;s\in(-\delta, \delta)\setminus\{0\},$$
		   	showing that $0\in\R$ is an isolated critical point. In particular, $F_{\bf u}((t, v)\exp s(a, w))\neq 0$ in $s\in(-\delta, \delta)\setminus\{0\}$ as desired. 
		   		   	
		   	\item $Aw+a\xi-a\mu\theta u\notin l_{\Delta}$
		   		   	
		   	In this case, 
		   	$$\frac{d}{ds}F_{\bf u}((t, v)\exp s(a, w))=\frac{1}{a}\left\langle \rho_{-as}\left(Aw+a\xi-a\mu\theta u\right), {\bf u}\right\rangle_{\R^2}.$$
		   	Since 
		   	$$\frac{d}{ds}_{|s=0}F_{\bf u}((t, v)\exp s(a, w))=\frac{1}{a}\left\langle Aw+a\xi-a\mu\theta u, {\bf u}\right\rangle_{\R^2}\neq 0,$$
		   	we get by continuity that there exists $\delta>$ such that 
		   	$$\frac{d}{ds}F_{\bf u}((t, v)\exp s(a, w))\neq 0, \;\;\forall s\in(-\delta, \delta),$$
		   	and hence $F_{\bf u}((t, v)\exp s(a, w))$ is strictly increasing or strictly decreasing the interval $(-\delta, \delta)$. In particular, 
		   	$$F_{\bf u}((t, v)\exp s(a, w))\neq 0, \;\;\mbox{ for all } s\in(-\delta, \delta)\setminus\{0\},$$
		   	proving the result.	   	
		   \end{enumerate}
		    \end{proof}

		  Using the previous lemma we have the following.

	    \begin{theorem}
	    	\label{curves}
		  	Let $\Sigma=\{\XC=(\xi, A), \Delta^L\}$ be a simple ARS on $G(\theta)$ with $A\neq 0$ and $(a, w)\in\fg(\theta)$. If $(t, v)\in G(\theta)\setminus\ZC$ the exponential curve $s\mapsto (t, v)\exp s(a, w)$ satisfies: 
		  	\begin{enumerate}
		  		\item $s\mapsto (t, v)\exp s(a, w)$ remains in the same component that contains $(t, v)$ or
		  		
		  		\item $s\mapsto (t, v)\exp s(a, w)$ intersects $\ZC$ discretely. 	  		
		  \end{enumerate}
		  \end{theorem}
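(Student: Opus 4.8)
The plan is to reduce the whole statement to the behaviour of the single real function $g(s):=F_{\bf u}((t,v)\exp s(a,w))$ along the curve, and then to feed it to the previous lemma via the one-parameter subgroup structure of $s\mapsto\exp s(a,w)$. By Theorem \ref{connectedcomponents}, since $A\neq 0$ we have $G(\theta)\setminus\ZC=\CC^-\sqcup\CC^+$, where $\CC^-$ and $\CC^+$ are the sets on which $F_{\bf u}$ is negative, respectively positive, while $\ZC$ is exactly the zero set of $F_{\bf u}$. Hence a point $\gamma(s):=(t,v)\exp s(a,w)$ lies on $\ZC$ iff $g(s)=0$, and lies in the component of $(t,v)$ iff $g(s)$ has the same sign as $g(0)$. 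Because $(t,v)\notin\ZC$, we have $g(0)=F_{\bf u}(t,v)\neq 0$, so $g$ is not the zero function. Everything reduces to understanding the zero set and the sign of the continuous function $g$.

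First I would dispose of the trivial alternative. If $g(s)\neq 0$ for every $s\in\R$, then by continuity $g$ keeps a constant sign on the connected set $\R$, so the entire curve stays in the single component ($\CC^+$ or $\CC^-$) fixed by the sign of $g(0)$, namely the one containing $(t,v)$. This is alternative $1$.

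The remaining case is that $g(s_0)=0$ for some $s_0$, i.e. $\gamma(s_0)\in\ZC$. The key step is to move the base point of the exponential curve onto the singular locus. Since $s\mapsto\exp s(a,w)$ is a one-parameter subgroup, $\exp((s_0+r)(a,w))=\exp s_0(a,w)\,\exp r(a,w)$, whence
$$(t,v)\exp((s_0+r)(a,w))=\gamma(s_0)\exp r(a,w),$$
so that $g(s_0+r)=F_{\bf u}(\gamma(s_0)\exp r(a,w))$ for all $r\in\R$. Now I apply the previous lemma to the point $\gamma(s_0)\in\ZC$ and the direction $(a,w)$: either $r\mapsto F_{\bf u}(\gamma(s_0)\exp r(a,w))$ vanishes identically, which forces $g\equiv 0$ and contradicts $g(0)\neq 0$, or there is $\delta>0$ with $g(s_0+r)\neq 0$ for all $r\in(-\delta,\delta)\setminus\{0\}$, meaning that $s_0$ is an isolated zero of $g$. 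As $s_0$ was an arbitrary zero, every zero of $g$ is isolated, so $g^{-1}(0)$ is discrete and the curve meets $\ZC$ only at a discrete set of parameters, which is alternative $2$.

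I expect the only delicate point to be this reduction to the lemma: one must check that translating the base point by $\exp s_0(a,w)$ reparametrises the very same exponential curve and lands on $\ZC$, so that the lemma's local statement at $\gamma(s_0)$ transfers to a statement about $g$ near $s_0$. Once this is in place the lemma does all the work, and the dichotomy (nowhere vanishing versus isolated zeros only) follows from $g\not\equiv 0$. As a sanity check one could instead observe directly from equation (\ref{zeros}) that $g$ is real-analytic in $s$, the entries of $\rho_{-t-as}$ and $\Lambda^{\theta}_{-as}$ being analytic, so that $g\not\equiv 0$ already forces $g^{-1}(0)$ to be discrete by the identity theorem; I would nonetheless keep the lemma-based argument since it stays entirely within the paper's own framework.
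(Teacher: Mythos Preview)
Your proposal is correct and follows essentially the same route as the paper: translate the base point onto $\ZC$ via the one-parameter subgroup identity $(t,v)\exp((s_0+r)(a,w))=\gamma(s_0)\exp r(a,w)$, apply the previous lemma at $\gamma(s_0)$, and rule out the identically-zero branch using $(t,v)\notin\ZC$. Your write-up is in fact slightly more complete, since you explicitly dispose of the case where $g$ never vanishes and note the analyticity sanity check, whereas the paper jumps directly to the case where a zero exists.
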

		  
		  \begin{proof}
		  Let us assume that for $s_0\in\R$ it holds that $(t, v)\exp s_0(a, w)\in\ZC$. Since  
		  $$(t, v)\exp (s_0+s)(a, w)=\underbrace{((t, v)\exp s_0(a, w))}_{\in\ZC}\exp s(a, w),$$
		  the previous lemma implies the following:
		  
		  \begin{enumerate}
		  	\item For all $s\in\R$ it holds that $F_{\bf u}((t, v)\exp (s_0+s)(a, w))\equiv 0$;
		  	
		  	If this condition holds, we would have that $(t, v)\exp (s_0+s)(a, w)\in\ZC, \forall s\in\R$ and consequently, $(t, v)\in G(\theta)\setminus\ZC\cap\ZC=\emptyset$ which is not possible. Therefore, $s\mapsto(t, v)\exp s(a, w)$ remains in the same component that contains $(t, v)$.
		  	
		  	\item There exists $\delta>0$ such that  $F_{\bf u}((t, v)\exp (s_0+s)(a, w))\neq 0$ for all $s\in (-\delta, \delta)\setminus\{0\}$.
		  	
		  	In this case, $(t, v)\exp (s_0+s)(a, w)$ intersects $\ZC$ at the point $(t, v)\exp s_0(a, w)$ and 
		  	$(t, v)\exp (s_0+s)(a, w)$ remains in the same component if $F_{\bf u}((t, v)\exp (s_0+s)(a, w))\neq 0$ does not changes sign for all $s\in (-\delta, \delta)\setminus\{0\}$ or 
		  	$\{(t, v)\exp (s_0+s)(a, w), \;\;s\in(-\delta, 0)\}$ and $\{(t, v)\exp (s_0+s)(a, w), \;\;s\in(0, \delta)\}$ belongs to different components if the sign of $F_{\bf u}((t, v)\exp (s_0+s)(a, w))\neq 0$ changes in $(-\delta, \delta)\setminus\{0\}$.
		  \end{enumerate}
		  
		  \end{proof}

		  \begin{remark}
		  	\label{crosses}
		    By a simple calculation, it is easy to verify that
		  	$$F_{\bf u}(\varphi_s(t, v))=\langle\rme^{sA}\left(Av+\Lambda_t^{\theta}\xi\right), {\bf u}\rangle_{\R^2}.$$
		  	Consequently, for a fixed $(t, v)\in G(\theta)\setminus\ZC_{\XC}$ the values of $s\in\R$ such that $\varphi_s(t, v)\in \ZC$ are zeros of the function
		  	$$s\in\R\mapsto \langle\rme^{sA}\left(Av+\Lambda_t^{\theta}\xi\right), {\bf u}\rangle_{\R^2},$$
		  and we conclude that
		    \begin{itemize}
		    	\item[(i)] The line $l_{\Delta}$ is an eigenspace of $A$ and 
		    	$$\{\varphi_s(t, v), s\in\R\}\subset\ZC\;\;\iff\;\;\varphi_s(t, v)\in\ZC, \;\mbox{ for some }\;\;s\in\R;$$
		    	
		    	\item[(ii)] The set $\{s\in\R; \;\;\varphi_s(t, v)\in\ZC\}$ is discrete,
		    \end{itemize}
		    showing that the solutions of the linear vector field $\XC$ of an ARS $\Sigma$ crosses its locus discretely or remains inside it.
		    
		  \end{remark}

	\appendix
	
	\section{Asymptotics of $2\times 2$ matrices}

	Let $a, b, c, \theta, \lambda, \lambda_1, \lambda_2\in\R$, with $\lambda_1\neq0\neq\lambda_2$ and consider the functions 
	\begin{equation}
	\label{functions}
	\gamma_1(t)=a\rme^{t\lambda_1}+b\rme^{t\lambda_2}+c, \;\;\;\gamma_2(t)=\rme^{\lambda t}(at+b)+c\;\;\;\mbox{ and }\;\;\;\chi(t)=\rme^{t\lambda}\cos(t+\theta)+c.
	\end{equation}
	
	It is straightforward to see that the following properties holds:
	
	\begin{enumerate}
		\item $a=b=0$ and $\gamma_i\equiv c$;
		\item $ab=0$ with $a^2+b^2\neq 0$ then $\gamma_i$ is unbounded and has at most one zero;
		\item $ab\neq 0$ then $\gamma$ is unbounded and has at most two zeros;
		\item $\chi$ has a finite number of zeros if $|c|>1$;
		\item The zeros of $\chi$ form an infinite discrete subset of $\R$ if $|c|\leq 1$. 
	\end{enumerate}

	\begin{lemma}
		\label{Lambda}
		Let $A\in\mathfrak{gl}(2, \R)$, $\tau\in\R$ and $u, v\in\R^2$ nonzero vectors. For the function 
		$$\gamma:\R\rightarrow\R, \;\;\;\;\gamma(t)=\rme^{tA}u\boldsymbol{\cdot}v+\tau,$$
		it holds:
		\begin{itemize}
			\item[1.] The eigenvalues of $A$ are real and  
			\subitem 1.1. $\gamma$ is an unbounded function with at most two zeros or 
			\subitem 1.2. $\gamma\equiv\tau$, $u\boldsymbol{\cdot}v=0$ and $u$ is an eigenvalue of $A$. 
			
			\item[2.] The eigenvalues of $A$ are complex and
			\subitem 2.1. $\gamma$ is unbounded with an enumerable discrete set of zeros if $\tr A\neq 0$ or
			\subitem 2.2. $\gamma$ is bounded with an enumerable discrete set of zeros if $\tr A=0$.
		\end{itemize}
	\end{lemma}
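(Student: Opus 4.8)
The plan is to analyze the function $\gamma(t)=\rme^{tA}u\boldsymbol{\cdot}v+\tau$ by reducing $A$ to a canonical form via its eigenvalue structure, since the behavior of $\rme^{tA}$ is completely governed by the eigenvalues of $A$. I would split into the two main cases announced in the statement: real eigenvalues (case 1) and complex eigenvalues (case 2). The key observation is that the entries of $\rme^{tA}$ are finite linear combinations of the scalar functions $\gamma_1,\gamma_2,\chi$ listed in \eqref{functions} just before the lemma, so that $\gamma(t)$ itself is such a combination (plus the constant $\tau$), and the enumerated properties of those functions then transfer directly to $\gamma$.

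First I would treat the real-eigenvalue case. Writing $A$ in real Jordan form, the matrix $\rme^{tA}$ has entries of the form $\rme^{t\lambda_1},\rme^{t\lambda_2}$ (distinct real eigenvalues, diagonalizable) or $\rme^{t\lambda},t\rme^{t\lambda}$ (repeated eigenvalue). Hence $\gamma(t)=\rme^{tA}u\boldsymbol{\cdot}v+\tau$ has exactly the shape of $\gamma_1(t)=a\rme^{t\lambda_1}+b\rme^{t\lambda_2}+c$ or $\gamma_2(t)=\rme^{\lambda t}(at+b)+c$. If not all exponential coefficients vanish, properties 2 and 3 give that $\gamma$ is unbounded with at most two zeros, which is subcase 1.1. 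The degenerate possibility is that all such coefficients are zero, forcing $\gamma\equiv\tau$; I would then show this is precisely the situation $u\boldsymbol{\cdot}v=0$ together with $u$ being an eigenvector of $A$ (what the statement calls ``$u$ is an eigenvalue of $A$''), which is subcase 1.2. This identification is the one spot requiring a small linear-algebra argument: the coefficients $a,b$ arise as pairings of $u,v$ against the (generalized) eigenvectors, and one must check that their simultaneous vanishing is equivalent to the stated eigenvector/orthogonality condition.

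Next I would treat the complex-eigenvalue case, where the eigenvalues are $\lambda\pm i$ after a harmless rescaling (or $\lambda\pm i\omega$ in general, reducible to $\omega=1$). In real canonical form $\rme^{tA}=\rme^{t\lambda}R_t$ with $R_t$ a rotation, so every entry is of the form $\rme^{t\lambda}\cos(t+\theta_0)$, and therefore $\gamma(t)$ matches $\chi(t)=\rme^{t\lambda}\cos(t+\theta)+c$ for an appropriate phase $\theta$ and amplitude absorbed into the cosine. When $\tr A=2\lambda\neq 0$ the exponential factor makes $\gamma$ unbounded, and since a damped/amplified cosine crosses any horizontal level on a discrete enumerable set, the zero set is discrete and enumerable (subcase 2.1). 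When $\tr A=0$ we have $\lambda=0$, so $\gamma(t)=\cos(t+\theta)+c$ is bounded and periodic, again with an enumerable discrete zero set, using properties 4 and 5 of $\chi$ (subcase 2.2).

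I expect the main obstacle to be the bookkeeping in case 1.2: verifying that the vanishing of \emph{all} exponential and polynomial-times-exponential coefficients in $\gamma$ is equivalent to the clean geometric statement that $u$ is an eigenvector of $A$ and $u\boldsymbol{\cdot}v=0$. In the diagonalizable subcase this follows by expanding $u$ and $v$ in an eigenbasis and checking that each coefficient $a,b$ is a product of a coordinate of $u$ and a coordinate of $v$; in the repeated-eigenvalue subcase one must additionally rule out the nilpotent part contributing a surviving $t\rme^{t\lambda}$ term unless $u$ lies in the genuine eigenspace. Everything else is a routine transfer of the already-tabulated asymptotic properties of $\gamma_1,\gamma_2,\chi$ to $\gamma$, so no further hard estimates are needed.
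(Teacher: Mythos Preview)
Your proposal is correct and follows essentially the same strategy as the paper: reduce $A$ to one of its three real canonical forms (diagonal, Jordan block, rotation-dilation), identify $\gamma$ with one of the model functions $\gamma_1,\gamma_2,\chi$ in \eqref{functions}, and read off the conclusions from properties 1--5 listed there. The paper is in fact more terse than you are---it simply writes down the three explicit expressions for $\gamma(t)$ in a suitable basis and declares that ``the assertions follow''---so your extra care in verifying that the degenerate subcase 1.2 is characterized exactly by $u\boldsymbol{\cdot}v=0$ and $u$ an eigenvector is a welcome addition rather than a deviation.
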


	\begin{proof}
	    By considering an appropriated orthonormal basis $\alpha$ of $\R^2$ and writing $[u]_{\alpha}=(a, b)$ and $[v]_{\alpha}=(c, d)$ it holds that
		$$\gamma(t)=ac\rme^{t\lambda_1}+bd\rme^{t\lambda_2}+\tau, \;\;\;\mbox{ if }\;\;\;[A]_{\alpha}=\left(\begin{array}{cc}
		\lambda_1 & 0\\0 & \lambda_2
		\end{array}\right),$$
		$$\gamma(t)=\rme^{\lambda t}\left(u\boldsymbol{\cdot} v+\varepsilon bct\right)+\tau\;\;\;\mbox{ if }\;\;\;[A]_{\alpha}=\left(\begin{array}{cc}
		\lambda & \varepsilon\\0 & \lambda
		\end{array}\right),$$
		or
		$$\gamma(t)=\rme^{\lambda t}\|u\|\|v\|\cos(\mu t+\theta_0)+\tau,\;\;\;\mbox{ if }\;\;\;[A]_{\alpha}=\left(\begin{array}{cc}
		\lambda & -\mu\\\mu & \lambda
		\end{array}\right),$$
		where $\theta_0$ is the angle between $u$ and $v$. In particular, the assertions follows from the analysis of the maps $\gamma_1, \gamma_2$ and $\chi$ defined in (\ref{functions}).		
	\end{proof}

		\section{Lifting ARS's to simply connected groups}
	
	In this section we prove some results relating simple ARS on general connected groups and their simply connected coverings. 
	
	Let $G$ be a connected $n$-dimensional Lie group with Lie algebra $\fg$. A vector field $\XC$ on $G$ is said to be {\bf linear} if it is complete and its associated flow $\{\varphi_s\}_{s\in\R}$ is a $1$-parameter subgroup of automorphisms of $G$. Any linear vector field is induces a derivation $\DC\in\mathrm{Der}(\fg)$ defined by 
	$$\DC Y=-[\XC, Y], \;\;\;Y\in\fg\;\;\mbox{ and satisfying }\;\;(d\varphi_s)_e=\rme^{s\DC}, \;\;s\in\R.$$

	A simple ARS $\Sigma$ on $G$ is a pair $\Sigma=\{\XC, \Delta^L\}$, where $\XC$ is a linear vector field and $\Delta^L$ is a left-invariant distribution, endowed with a left-invariant Riemannian metric, satisfying:
	\begin{itemize}
		\item[(i)] The set $\{\XC(g)\in\Delta^L(g)\}$ is nonempty;
		\item[(ii)] $\Sigma$ satisfies the LARC, that is, $\DC\Delta\not\subset\Delta$ or $[\Delta, \Delta]\not\subset\Delta,$ where $\Delta=\Delta^L(e)$.
	\end{itemize}

As in the 3D case, the metric on $G$ associated with $\Sigma$ is defined by declaring that $\XC$ is unitary and orthogonal to $\Delta^L$ at every point of $G$. The singular set $\ZC$ is the set where the metric fail to be Riemannian and is given as
$$\ZC=\{g\in G; \;\XC(g)\in\Delta^L(g)\}.$$
The almost-Riemannian norm is defined as 
$$\|X\|_{\Sigma, g}=\min\left\{\sqrt{\sum_{j=0}^{n-1}\alpha_i^2}; \;\;\alpha_0\XC(g)+\alpha_1Y_1^L(g)+\cdots+\alpha_{n-1}Y^L_{n-1}(g)=X\right\},$$ 
	where $\{Y_1, \ldots, Y_{n-1}\}$ is an orthonormal basis of $\Delta$. If $\Sigma_1$ and $\Sigma_2$ are simple ARS's on $G$ we denote by $\mathrm{Iso}_G(\Sigma_1; \Sigma_2)$ the group of isometries between $\Sigma_1$ and $\Sigma_2$, that is, $\psi\in\mathrm{Iso}_G(\Sigma_1; \Sigma_2)$ if and only if $\psi:G\rightarrow G$ is a diffeomorphism satisfying 
	$$\forall g\in G, v\in T_gG; \;\;\;\;\;\|(d\psi)_gv\|_{\Sigma_2, \psi(g)}=\|v\|_{\Sigma_1, g}.$$
	We also define the set of isometries fixing the identity as 
	$$\mathrm{Iso}_G(\Sigma_1; \Sigma_2)_0=\left\{\psi\in\mathrm{Iso}_G(\Sigma_1; \Sigma_2); \psi(e)=e\right\}.$$

	Let us denote by $\widetilde{G}$ the connected simply connected covering of $G$ and by $\pi:\widetilde{G}\rightarrow G$ the canonical homomorphism. For a simple ARS $\Sigma=\{\XC, \Delta^L\}$ on $G$ we define the {\bf lift} of $\Sigma$ to be the unique simple ARS $\widetilde{\Sigma}=\{\widetilde{\XC}, \widetilde{\Delta}^L\}$ on $\widetilde{G}$ satisfying
    \begin{equation}
    	\label{lift}
    	\pi_*\widetilde{\XC}=\XC\circ\pi\;\;\;\mbox{ and }\;\;\;\pi_*\widetilde{\Delta}^L=\Delta\circ\pi,
    \end{equation}
where the invariant metric in $\widetilde{\Delta}^L$ is the one that turns the restriction $$(d\pi)_g|_{\widetilde{\Delta}^L(g)}:\widetilde{\Delta}(g)\rightarrow\Delta^L(\pi(g)), \;\;\forall \;g\in G,$$
into an isometry.

	The existence of $\widetilde{\XC}$ is guaranteed in by \cite[Theorem 2.2]{AyTi} and the existence of the distribution $\widetilde{\Delta}^L$ comes from the fact that $\pi$ is a homomorphism and a local diffeomorphism. Moreover, the same properties of $\pi$ implies that $\widetilde{\Sigma}$ satisfies conditions (i) and (ii) in the definition of simple ARS's.
	
	Next we show some relations between a simple ARS and its lift. 
	
	\begin{proposition}
		Let $\Sigma$ be a simple ARS and $\widetilde{\Sigma}$ its lift. If $\ZC$ and $\widetilde{\ZC}$ stands for the singular locus of $\Sigma$ and $\widetilde{\ZC}$, respectively, then 
		$$\pi(\widetilde{\ZC})=\ZC\;\;\;\mbox{ and }\;\;\;\pi^{-1}(\widetilde{\ZC})=\ZC.$$
	\end{proposition}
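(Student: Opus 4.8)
The plan is to reduce both set equalities to a single pointwise equivalence, namely that a point $g\in\widetilde{G}$ belongs to $\widetilde{\ZC}$ if and only if its projection $\pi(g)$ belongs to $\ZC$. Once this is established, the identity $\widetilde{\ZC}=\pi^{-1}(\ZC)$ is immediate, and both displayed equalities follow from it: since $\pi$ is surjective, applying $\pi$ to $\widetilde{\ZC}=\pi^{-1}(\ZC)$ yields $\pi(\widetilde{\ZC})=\pi(\pi^{-1}(\ZC))=\ZC$, while $\pi^{-1}(\ZC)=\widetilde{\ZC}$ is exactly the pointwise characterization read in reverse.

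First I would recall the two defining relations (\ref{lift}) of the lift in pointwise form: for every $g\in\widetilde{G}$ one has $(d\pi)_g\widetilde{\XC}(g)=\XC(\pi(g))$ and $(d\pi)_g\widetilde{\Delta}^L(g)=\Delta^L(\pi(g))$. The crucial structural input is that $\pi:\widetilde{G}\to G$, being the covering homomorphism, is a local diffeomorphism; hence each differential $(d\pi)_g:T_g\widetilde{G}\to T_{\pi(g)}G$ is a linear isomorphism. In particular it restricts to a linear isomorphism from the $2$-dimensional subspace $\widetilde{\Delta}^L(g)$ onto $\Delta^L(\pi(g))$.

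The heart of the argument is then the elementary linear-algebra fact that, for an injective linear map $T$, a vector $v$ lies in a subspace $W$ if and only if $Tv$ lies in $TW$ (the nontrivial direction uses injectivity: $Tv=Tw$ with $w\in W$ forces $v=w$). Applying this to $T=(d\pi)_g$, $v=\widetilde{\XC}(g)$ and $W=\widetilde{\Delta}^L(g)$, and substituting the two relations above, gives
\[
\widetilde{\XC}(g)\in\widetilde{\Delta}^L(g)\;\;\iff\;\;\XC(\pi(g))\in\Delta^L(\pi(g)),
\]
which is precisely $g\in\widetilde{\ZC}\iff\pi(g)\in\ZC$, as desired.

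I do not expect any serious obstacle: the statement is essentially a bookkeeping consequence of the lift being defined so that $\pi$ becomes a fibrewise isometry carrying $\widetilde{\XC}$ and $\widetilde{\Delta}^L$ onto $\XC$ and $\Delta^L$. The only point that genuinely requires care is that the equivalence in the heart step must hold in \emph{both} directions, and this is exactly where one must invoke that $\pi$ is a local diffeomorphism (so that $(d\pi)_g$ is bijective rather than merely surjective); a generic submersion would only furnish one implication. The surjectivity of $\pi$ is the other mild ingredient, used once to pass from $\widetilde{\ZC}=\pi^{-1}(\ZC)$ to $\pi(\widetilde{\ZC})=\ZC$.
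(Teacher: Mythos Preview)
Your proposal is correct and follows essentially the same approach as the paper: the paper's proof is the single chain of equivalences $g\in\widetilde{\ZC}\iff\widetilde{\XC}(g)\in\widetilde{\Delta}^L(g)\iff (d\pi)_g\widetilde{\XC}(g)\in(d\pi)_g\widetilde{\Delta}^L(g)\iff \XC(\pi(g))\in\Delta^L(\pi(g))\iff\pi(g)\in\ZC$, invoking that $\pi$ is a local diffeomorphism and the lift relations. You have simply made explicit the linear-algebra step (injectivity of $(d\pi)_g$) and the use of surjectivity of $\pi$ to pass from $\widetilde{\ZC}=\pi^{-1}(\ZC)$ to $\pi(\widetilde{\ZC})=\ZC$, both of which the paper leaves tacit.
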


\begin{proof}
	In fact, since $\pi$ is a local diffeomorphism, relation (\ref{lift}) implies that
	$$g\in\widetilde{\ZC}\iff\widetilde{\XC}(g)\in\widetilde{\Delta}^L(g)\iff (d\pi)_g\widetilde{\XC}(g)\in(d\pi)_g\widetilde{\Delta}^L(g)\iff \XC(\pi(g))\in\Delta^L(\pi(g))\iff\pi(g)\in\ZC$$
\end{proof}

Let us notice that the previous result implies that $\ZC$ is a submanifold of $G$ if and only if $\widetilde{\ZC}$ is a submanifold of $\widetilde{G}$. The next result relates isometries of a simple ARS and its lift.

\begin{proposition}
	Let $\Sigma_1, \Sigma_2$ be a simple ARS's on $G$ and $\widetilde{\Sigma}_1$, $\widetilde{\Sigma}_2$ their respective lifts. For any $\psi\in\mathrm{Iso}(\Sigma_1; \Sigma_2)_0$ there exists a unique $\widetilde{\psi}\in \mathrm{Iso}(\widetilde{\Sigma}_1; \widetilde{\Sigma}_2)_0$ satisfying
	\begin{equation}
		\label{liftiso}
			\pi\circ\widetilde{\psi}=\psi\circ\pi.
	\end{equation}
	Moreover, if $\widetilde{\psi}\in\mathrm{Aut}(\widetilde{G})$ then $\psi\in\mathrm{Aut}(G)$.
\end{proposition}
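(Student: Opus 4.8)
The plan is to construct $\widetilde\psi$ by covering space theory and then verify successively that it is a diffeomorphism, an isometry, and finally (under the extra hypothesis) an automorphism.

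First I would produce the map. The composition $\psi\circ\pi:\widetilde G\to G$ is continuous and, since $\psi(e)=e$, it sends the identity $\widetilde e$ of $\widetilde G$ to $\pi(\widetilde e)=e$. Because $\widetilde G$ is simply connected, the lifting criterion for the covering $\pi$ guarantees a unique continuous map $\widetilde\psi:\widetilde G\to\widetilde G$ with $\pi\circ\widetilde\psi=\psi\circ\pi$ and $\widetilde\psi(\widetilde e)=\widetilde e$. Since $\pi$ is a local diffeomorphism and $\psi$ is smooth, on a neighborhood of each point $\widetilde\psi$ agrees with $(\pi|_V)^{-1}\circ\psi\circ\pi$ and is therefore smooth. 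To see it is a diffeomorphism, I would apply the same construction to $\psi^{-1}\in\mathrm{Iso}(\Sigma_2;\Sigma_1)_0$, obtaining a lift $\widetilde{\psi^{-1}}$ fixing $\widetilde e$; then $\widetilde{\psi^{-1}}\circ\widetilde\psi$ and $\mathrm{id}_{\widetilde G}$ are both lifts of $\pi$ fixing $\widetilde e$, so by uniqueness they coincide, and likewise for the other composition. This also yields the uniqueness claim: any element of $\mathrm{Iso}(\widetilde\Sigma_1;\widetilde\Sigma_2)_0$ satisfying (\ref{liftiso}) is a lift of $\psi\circ\pi$ fixing $\widetilde e$ and hence equals $\widetilde\psi$.

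The key analytic ingredient is that $\pi$ preserves the almost-Riemannian norm. By the definition of the lift, $(d\pi)_g\widetilde\XC(g)=\XC(\pi(g))$ and $(d\pi)_g$ restricts to a linear isometry of $\widetilde\Delta^L(g)$ onto $\Delta^L(\pi(g))$ carrying the chosen orthonormal frame to an orthonormal frame. Since $\pi$ is a local diffeomorphism, $(d\pi)_g$ is a linear isomorphism, so it induces a bijection between the admissible representations $\alpha_0\widetilde\XC(g)+\sum_i\alpha_i\widetilde{Y}_i^L(g)$ of a vector $Z\in T_g\widetilde G$ and those of $(d\pi)_gZ$, with identical coefficients. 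Taking minima gives $\|Z\|_{\widetilde\Sigma,g}=\|(d\pi)_gZ\|_{\Sigma,\pi(g)}$ for each ARS and its lift. With this in hand, the isometry property of $\widetilde\psi$ follows by chaining identities: using $(d\pi)_{\widetilde\psi(g)}\circ(d\widetilde\psi)_g=(d\psi)_{\pi(g)}\circ(d\pi)_g$ (the differential of (\ref{liftiso})) together with $\pi(\widetilde\psi(g))=\psi(\pi(g))$ and the fact that $\psi$ is an isometry between $\Sigma_1$ and $\Sigma_2$, one computes $\|(d\widetilde\psi)_gZ\|_{\widetilde\Sigma_2,\widetilde\psi(g)}=\|(d\pi)_gZ\|_{\Sigma_1,\pi(g)}=\|Z\|_{\widetilde\Sigma_1,g}$. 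Hence $\widetilde\psi\in\mathrm{Iso}(\widetilde\Sigma_1;\widetilde\Sigma_2)_0$.

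Finally, for the additional assertion I would assume $\widetilde\psi\in\mathrm{Aut}(\widetilde G)$ and use surjectivity of $\pi$. Writing arbitrary elements of $G$ as $g_i=\pi(\widetilde g_i)$ and using that $\pi$ and $\widetilde\psi$ are homomorphisms together with (\ref{liftiso}), one gets $\psi(g_1g_2)=\pi(\widetilde\psi(\widetilde g_1\widetilde g_2))=\pi(\widetilde\psi(\widetilde g_1))\,\pi(\widetilde\psi(\widetilde g_2))=\psi(g_1)\psi(g_2)$, so $\psi$ is a homomorphism; being already a diffeomorphism, it is an automorphism of $G$. I expect the only real subtlety to be the verification that $\pi$ is a local almost-Riemannian isometry, i.e.\ the norm-preservation formula, since that is where the specific choice of the lifted metric enters; the remaining steps are standard covering-space theory and definitional bookkeeping.
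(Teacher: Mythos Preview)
Your proof is correct, but the route differs from the paper's. The paper, after lifting $\psi$ via covering theory, does \emph{not} argue directly with the almost-Riemannian norm. Instead it separately establishes that $\widetilde\psi$ intertwines the two pieces of structure: it shows $\widetilde\psi\circ\widetilde\varphi^1_s=\widetilde\varphi^2_s\circ\widetilde\psi$ by computing that both sides project under $\pi$ to the same map (using $\psi\circ\varphi^1_s=\varphi^2_s\circ\psi$) and then invoking discreteness of $\ker\pi$ together with continuity in $s$; it shows $(d\widetilde\psi)_g\widetilde\Delta^L_1(g)=\widetilde\Delta^L_2(\widetilde\psi(g))$ by differentiating the lift relation; and only then checks that the restriction to the distribution is a metric isometry. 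From these three facts the isometry property of $\widetilde\psi$ follows. Your argument is more economical: by observing once that $(d\pi)_g$ carries the generating frame of $\widetilde\Sigma_i$ bijectively to that of $\Sigma_i$, you get $\|Z\|_{\widetilde\Sigma_i,g}=\|(d\pi)_gZ\|_{\Sigma_i,\pi(g)}$ for all $Z$, and a single chain of equalities finishes. What the paper's approach buys is the extra structural information that $\widetilde\psi$ conjugates the specific data $(\widetilde\XC_i,\widetilde\Delta_i^L)$ and not merely the norm; what yours buys is a shorter argument that stays closer to the actual definition of isometry. Your explicit construction of $\widetilde{\psi^{-1}}$ to verify that $\widetilde\psi$ is a diffeomorphism is also a point the paper leaves implicit. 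The automorphism clause is handled identically in both.
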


\begin{proof}
	Since $\pi:\widetilde{G}\rightarrow G$ is the simply connected covering of $G$, there exists a unique map $\widetilde{\psi}$ with $\widetilde{\psi}(e)=e$ and satisfying (\ref{liftiso}) and we only have to show that $\widetilde{\psi}\in\mathrm{Iso}(\widetilde{\Sigma}_1; \widetilde{\Sigma}_2)_0$.
	
	Let $\{\varphi^i_s\}_{s\in\R}$ and $\{\widetilde{\varphi}^i_s\}_{s\in\R}$ to be the flows of $\XC_i$ and $\widetilde{\XC}_i$, respectively. For any $s\in\R$ it holds that  
	$$\pi\circ(\widetilde{\psi}\circ\widetilde{\varphi}^1_s)=(\pi\circ\widetilde{\psi})\circ\widetilde{\varphi}^1_s\stackrel{(\ref{liftiso})}{=}(\psi\circ\pi)\circ\widetilde{\varphi}^1_s=\psi\circ(\pi\circ\widetilde{\varphi}_s^1)\stackrel{(\ref{lift})}{=}\psi\circ(\varphi_s^1\circ\pi)=(\psi\circ\varphi_s^1)\circ\pi$$
	$$\stackrel{\psi\in\mathrm{Iso}(\Sigma_1; \Sigma_2)_0}{=}(\varphi_s^2\circ \psi)\circ\pi=\varphi_s^2\circ (\psi\circ\pi)=\stackrel{(\ref{liftiso})}{=}\varphi_s^2\circ (\pi\circ\widetilde{\psi})=(\varphi_s^2\circ \pi)\circ\widetilde{\psi}\stackrel{(\ref{lift})}{=}(\pi\circ\widetilde{\varphi}^2_s)\circ\widetilde{\psi}=\pi\circ(\widetilde{\varphi}^2_s\circ\widetilde{\psi}).$$
	As a consequence, for any $g\in \widetilde{G}$ we have the continuous curve 
	$$s\in\R\mapsto \widetilde{\psi}(\widetilde{\varphi}^1_s(g))\left(\widetilde{\varphi}^2_s(\widetilde{\psi}(g))\right)^{-1}\in\ker\pi.$$
	Since $\ker\pi$ is discrete, we have conclude that 
	$$\widetilde{\psi}\circ\widetilde{\varphi}^1_s=\widetilde{\varphi}^2_s\circ\widetilde{\psi}\;\;\implies\;\;\pi_*\circ\widetilde{\XC}=\XC\circ\pi.$$
	On the other hand, it holds that
	$$(d\pi)_{\widetilde{\psi}(g)}(d\widetilde{\psi})_g\widetilde{\Delta}_1(g)\stackrel{(\ref{liftiso})}{=}(d\psi)_{\pi(g)}(d\pi)_g\widetilde{\Delta}_1(g)\stackrel{(\ref{lift})}{=}(d\psi)_{\pi(g)}\Delta_1(\pi(g))$$
	$$\stackrel{\psi\in\mathrm{Iso}(\Sigma_1; \Sigma_2)_0}{=}\Delta^L_2(\psi(\pi(g)))=\Delta^L_2(\pi(\widetilde{\psi}(g)))=(d\pi)_{\widetilde{\psi}(g)}\widetilde{\Delta}^L_2(\widetilde{\psi}(g)),$$
	and since $\pi$ is a local diffeomorphism, we conclude that 
	$$\widetilde{\psi}_*\widetilde{\Delta}_1=\widetilde{\Delta}^L_2\circ \widetilde{\psi}.$$
	Therefore, in order to show that $\widetilde{\psi}\in \mathrm{Iso}(\widetilde{\Sigma}_1; \widetilde{\Sigma}_2)_0$ it is enough to assure that the restriction 
	$$(d\widetilde{\psi})_g|_{\widetilde{\Delta}_1}:\widetilde{\Delta}_1(g)\rightarrow \widetilde{\Delta}_2(\widetilde{\psi}(g),$$
	is, for all $g\in \widetilde{G}$, an isometry. However, for any $X\in\widetilde{\Delta}_1$, the fact that $\pi$ is an isometry between $\widetilde{\Delta}_i$ and $\Delta_i$, $i=1, 2$, implies that 
	$$\|(d\widetilde{\psi})_gX(g)\|_{\widetilde{\Sigma_2}, \widetilde{\psi}(g)}=\|(d\pi)_{\widetilde{\psi}(g)}(d\widetilde{\psi})_gX(g)\|_{\Sigma_2, \pi(\widetilde{\psi}(g))}\stackrel{(\ref{liftiso})}{=}\|(d\psi)_{\pi(g)}(d\pi)X(g)\|_{\Sigma_2, \psi(\pi(g))}$$
	$$\stackrel{\psi\in\mathrm{Iso}(\Sigma_1; \Sigma_2)_0}{=}\|(d\pi)X(g)\|_{\Sigma_1, \pi(g)}=\|X(g)\|_{\widetilde{\Sigma}_1, g},$$
	which implies that $\widetilde{\psi}\in \mathrm{Iso}(\widetilde{\Sigma}_1; \widetilde{\Sigma}_2)_0$. 

	For the second assertion, let us assume that $\widetilde{\psi}\in\mathrm{Aut}(\widetilde{G})$ and consider $g_1, g_2\in G$. By writting $g_i=\pi(h_i)$, $i=1, 2$ we have that 
	$$\psi(g_1g_2)=\psi(\pi(h_1)\pi(h_2))=\psi(\pi(h_1h_2))\stackrel{(\ref{liftiso})}{=}\pi(\widetilde{\psi}(h_1h_2))$$
	$$\stackrel{\widetilde{\psi}\in\mathrm{Aut}(\widetilde{G})}{=}\pi(\widetilde{\psi}(h_1)\widetilde{\psi}(h_2))=\pi(\widetilde{\psi}(h_1))\pi(\widetilde{\psi}(h_2))\stackrel{(\ref{liftiso})}{=}\psi(\pi(h_1))\psi(\pi(h_2))=\psi(g_1)\psi(g_2),$$
	showing that $\psi\in\mathrm{Aut}(G)$ and concluding the proof.
\end{proof}

\begin{remark}
	Applying the previous results to the 3D solvable nonnilpotent case allow us to conclude that as the main results obtained in Sections 3 and 4 are also true in the case where the group is not necessarely simply connected. 
\end{remark}

\end{document}